\theoremstyle{plain}
\newtheorem*{main-theorem}{Main Theorem}
\newtheorem{theorem}[equation]{Theorem}
\newtheorem{prop}[equation]{Proposition}
\newtheorem{corollary}[equation]{Corollary}
\newtheorem{conjecture}[equation]{Conjecture}
\newtheorem*{claim*}{Claim}
\newtheorem{lemma}[equation]{Lemma}
\theoremstyle{definition}
\newtheorem{definition}[equation]{Definition}
\newtheorem{assumption}[equation]{Assumption}
\newtheorem{question}[equation]{Question}
\theoremstyle{remark}
\newtheorem{remark}[equation]{Remark}
\numberwithin{equation}{section}
\DeclareMathOperator{\spec}{Spec}
\DeclareMathOperator{\Pic}{Pic}
\DeclareMathOperator{\Sym}{Sym}
\DeclareMathOperator{\proj}{Proj}
\DeclareMathOperator{\Stab}{Stab}
\DeclareMathOperator{\rank}{rank}
\DeclareMathOperator{\Cliff}{Cliff}
\DeclareMathOperator{\SL}{SL}
\DeclareMathOperator{\PGL}{PGL}
\DeclareMathOperator{\Grass}{Grass}
\DeclareMathOperator{\Gr}{Gr}
\def\VV{\mathbb{V}}
\def\GG{\mathbb{G}}
\newcommand{\gitq}{/\hspace{-0.25pc}/}
\newcommand\Mg[1]{\overline{\mathcal{M}}_{#1}}
\newcommand\M{\overline{M}}
\def\noqed{\renewcommand{\qedsymbol}{}}
\def\cS{\mathcal{S}}
\def\cX{\mathcal{X}}
\def\cL{\mathcal{L}}
\def\cF{\mathcal{F}}
\def\cQ{\mathcal{Q}}
\def\cM{\mathcal{M}}
\def\cI{\mathcal{I}}
\def\cU{\mathcal{U}}
\def\cE{\mathbb{E}}
\def\co{\colon\thinspace} 
\def\QQ{\mathbb{Q}}
\def\LL{\mathbb{L}}
\def\PP{\mathbb{P}}
\def\ZZ{\mathbb{Z}}
\def\CC{\mathbb{C}}
\def\HH{\mathrm{H}}
\def\cO{\mathcal{O}}
\DeclareMathOperator{\Cone}{Cone}
\DeclareMathOperator{\Sp}{Syz}
\DeclareMathOperator{\Aut}{Aut}
\newcommand{\s}{\operatorname{ss}}
\newcommand{\cG}{\mathcal{G}}
\def\blfootnote{\xdef\@thefnmark{}\@footnotetext}
\address[Fedorchuk]{Department of Mathematics\\
Boston College\\
140 Commonwealth Ave\\
Chestnut Hill, MA 02467, USA}
\email{maksym.fedorchuk@bc.edu}
\begin{document}

\title[GIT of syzygies with applications]{Geometric invariant theory of syzygies, with applications to moduli spaces}\blfootnote{{\bf Mathematics Subject Classification:} 14L24, 13A50, 14H10, 14D22, 13D02}\blfootnote{{\bf Keywords:} Geometric Invariant Theory, syzygies, moduli of curves, K3 surfaces}

\author{Maksym Fedorchuk}

\begin{abstract} 
We define syzygy points of projective schemes, and introduce a program of studying their GIT
stability. Then we describe two cases where we have managed to make some progress in this program,
that of polarized K3 surfaces of odd genus, and of genus six canonical curves.  Applications of our results
include effectivity statements for divisor classes on the moduli space of odd genus K3 surfaces,
and a new construction in the Hassett-Keel program for the moduli space of genus six curves. 

\end{abstract}

\maketitle

\setcounter{tocdepth}{1}
\tableofcontents

\section{Introduction}

Geometric Invariant Theory (GIT) was developed by Mumford \cite{GIT} in order 
to construct algebraic varieties parameterizing orbits
of a reductive linear algebraic group acting on a scheme,
and has since become an important tool 
in the algebraic geometer's arsenal, wielded to great effect in construction of 
various moduli spaces.  A GIT method of constructing compact moduli spaces of polarized varieties was laid out in 
Gieseker's pioneering work \cite{gieseker-surfaces}, and has been applied since then with 
minor variations.  

Gieseker's approach is rooted in the notion of \emph{Hilbert stability}, which we
now recall.  Suppose we are given a sufficiently nice class of polarized varieties with fixed numerical invariants,
\[
\mathfrak{C}=\bigl\{(X,\cL): \text{$X$ is projective with an ample line bundle $\cL$}\bigr\},
\] 
where the objects whose moduli space we desire to construct live. 
The first step is a choice of 
an integer $k$ so that $\cL^k$ defines an embedding of $X$ into a fixed projective space $\PP^{r-1}$
for every $(X,\cL)\in \mathfrak{C}$. 
The ideal sheaf $\cI_X \subset \cO_{\PP^{r-1}}$
then gives a point in the Hilbert scheme $\operatorname{Hilb}(\PP^{r-1})$.
More concretely, fix $q \gg 0$, and let $m = h^0\bigl(X, \cO_X(q)\bigr)$.
Using the short exact sequence
\[
0 \to \HH^0\bigl(\PP^{r-1},\cI_X(q)\bigr) \to \HH^0\bigl(\PP^{r-1},\cO(q)\bigr)\to \HH^0\bigl(X,\cO_X(q)\bigr) \to 0,
\]
Gieseker defines \emph{the $q^{th}$ Hilbert point of 
$X\hookrightarrow \PP^{r-1}$} to be the point of the
Grassmannian $\Gr\left(\HH^0\bigr(\PP^{r-1}, \cO(q)\bigl), m\right)$ represented by the surjection
$ \HH^0\bigl(\PP^{r-1},\cO(q)\bigr)\to \HH^0\bigl(X,\cO_X(q)\bigr)$ (see \S\ref{S:notation} for our conventions concerning Grassmannians).
For $q\gg 0$, the $q^{th}$ Hilbert point determines $(X,\cL)$ up to the action of $\PGL(r)$,
and the assignment $\cI_X \mapsto  \HH^0\bigl(\PP^{r-1}, \cI_X(q)\bigr)$ is a closed embedding 
of $\operatorname{Hilb}(\PP^{r-1})$ into the Grassmannian 
\[
\Gr\left(\HH^0\bigr(\PP^{r-1}, \cO(q)\bigl), m\right) =\Grass\left(h^0\bigl(\PP^{r-1}, \cI_X(q)\bigr), \HH^0\bigl(\PP^{r-1},\cO(q)\bigr)\right).
\]
The pullback of the Pl\"ucker line bundle from this Grassmannian to $\operatorname{Hilb}(\PP^{r-1})$
is an ample $\SL(r)$-linearization, so the general machinery of GIT
produces a quotient parameterizing varieties with GIT {semistable} $q^{th}$ Hilbert points.

If the $q^{th}$ Hilbert point of $X$ is GIT semistable for $q\gg 0$, one says that $X$ is  
\emph{Hilbert semistable}. Many examples of successful GIT applications in moduli theory are obtained 
as GIT quotients of parameter spaces of Hilbert semistable objects in a suitably chosen $\mathfrak{C}$. 
These include GIT constructions of the moduli space $\M_g$ 
of Deligne-Mumford stable curves, the moduli space of surfaces of general type (both by Gieseker,
in \cite{gieseker-cime} and \cite{gieseker-surfaces}, respectively), 
and the moduli space of canonically polarized manifolds (Viehweg \cite{viehweg}).

In this paper, we propose a modification of the Gieseker's method based on
Koszul cohomology. Its basic idea is very simple. Since the $q^{th}$ Hilbert point of $X$ 
is the vector space of degree $q$ hypersurfaces containing $X$, it describes the 
degree $q$ generators of the homogeneous ideal $I_X$ of $X$. It is thus natural to look at higher syzygy 
modules of $I_X$, describing (higher) relations among the generators. Doing
this gives rise to a notion of syzygy points, which depend on two parameters, $p$ and $q$,
and which generalize $q^{th}$ Hilbert points. We formally introduce 
$(p,q)$-syzygy points in Section \ref{S:syzygy}.

Once a syzygy point is defined, GIT enters the picture in two different but related ways. 
The most classical scenario would be to first consider a component $T$ of the Hilbert scheme  
$\operatorname{Hilb}(\PP^{r-1})$
of those polarized schemes in class $\mathfrak{C}$ that live in a fixed projective space $\PP^{r-1}$, just as above. 
But instead of looking at Hilbert semistable objects in $T$, we now consider 
the locus of $t\in T$ such that $X_t$ has a semistable $(p,q)$-syzygy point. If this locus is non-empty,
we consider its GIT quotient by the action of $\SL(r)$ and ask what functor 
does the resulting GIT quotient represent. This question has not been 
systematically studied before for any value of $p$ other than $p=0$, which is the case of Hilbert stability. 
In Section \ref{S:genus-6}, we
give a non-trivial instance of a reasonably complete answer to this question for genus six curves.

The other question that we can ask, in the spirit of a well-known work of Cornalba and Harris \cite{CH},
is what does GIT semistability of a $(p,q)$-syzygy point for a \emph{single} $(X,\cL)\in \mathfrak{C}$ say about the birational geometry of the moduli space for objects in $\mathfrak{C}$? We first recast the Cornalba-Harris
approach in the language of syzygy points in Section \ref{S:families}.
Two examples of $\mathfrak{C}$ to which we apply it
in this paper are polarized K3 surfaces of odd genus and 
Deligne-Mumford stable curves; considered
in Sections \ref{S:k3} and \ref{S:canonical}, respectively.

A disclaimer is in order: 
We do not prove anything new about Koszul cohomology of curves and K3 surfaces.
For an overview of very recent progress in this area the reader is encouraged to consult 
\cite{farkas-survey} and \cite{kemeny-survey}. Rather, 
the aim of this paper is to illustrate that a study of \emph{GIT stability of syzygy points}, 
even in such well-mined cases as curves and K3 surfaces, 
might turn out to be just as fruitful as that of Hilbert points.

\subsection{Notation and conventions}\label{S:notation} We work over $\CC$. A flat family $\pi \co \cX\to T$ of schemes
over an irreducible one-dimensional base is called an \emph{isotrivial specialization} if 
for some $0\in T$, we have that $\cX\times_{T} (T\setminus \{0\})$ is an isotrivial family over $T\setminus \{0\}$.
Given an isotrivial specialization, we say that $X_{t}$ (where $t\neq 0$) isotrivially specializes to $X_0$.

If $V$ is a vector space, we define by $\mathbb S^{\mathbf{\lambda}} (V)$ its Schur functor given 
by a partition $\lambda$. We denote by $\Grass(k, V)$ the Grassmannian of $k$-dimensional subspaces of $V$
and by $\Gr(V, k)$ the Grassmannian of $k$-dimensional quotients of $V$. In Section \ref{S:genus-6},
we use $\epsilon$ to denote a sufficiently small positive rational number.

We assume familiarity with GIT, VGIT, and Koszul cohomology of curves and K3 surfaces, 
as discussed, for example,
in \cite{GIT}, \cite{dolgachev-hu}, and \cite{aprodu-farkas-survey}, respectively.

\subsection*{Acknowledgements} Foremost, this paper owes its existence to the organizers of the Abel Symposium 2017
``Geometry of Moduli,'' who gave me an opportunity 
and motivation to write up this work. I am also indebted to Gavril Farkas, whose influence
is evident in every section of this paper, and who generously 
shared his and Se\'{a}n Keel's ideas to use syzygies as 
the means to construct the canonical model of $\M_g$ 
at an AIM workshop in December 2012.  All results in this paper grew out of my attempt to implement these ideas. 
I am grateful to Anand Deopurkar for his comments and suggestions 
on an earlier version of this paper. During the preparation of this paper,
I was partially supported by the NSA Young Investigator grant H98230-16-1-0061 and Alfred P. Sloan Research Fellowship.

\section{Syzygy points}\label{S:syzygy}
A precise machinery for
defining syzygy points was developed by Green under the name of Koszul cohomology \cite{green}. 
We briefly sketch it in the way most amenable to the application of GIT in what follows. 

Let $V$ be a $\CC$-vector space of dimension $r$ and $S=\Sym V$ be its symmetric algebra. Then the minimal graded free resolution of
$\CC$ as an $S$-module is given by the {Koszul complex} $\mathbb{K}(S)$:
\begin{equation*}\label{E:koszul-complex}
0\to \bigwedge^r V \otimes S(-r) \to  \bigwedge^{r-1} V \otimes S(-r+1) \to \cdots \to \bigwedge^2 V \otimes S(-2)
\to V\otimes S(-1) \to S. 
\end{equation*}
In homological degree $p$, and graded degree $p+q$, the differential 
$d_{p,q} \colon \wedge^{p} V \otimes \Sym^{q} V \to \wedge^{p-1} V \otimes \Sym^{q+1} V$ is given by 
\begin{equation*}\label{E:koszul-differential}
d_{p,q}(x_0\wedge x_1\wedge \cdots\wedge x_{p-1}\otimes y)
=\sum_{i=0}^{p-1} (-1)^i \, x_0\wedge \cdots \wedge \widehat{x_i}\wedge \cdots \wedge x_{p-1}\otimes x_i y.
\end{equation*}

Now given an arbitrary graded $S$-module $R$, the Koszul complex of $R$ is 
defined simply as $\mathbb{K}(R):=\mathbb{K}(S)\otimes R$. If $d_{p,q}^{R}$ 
are the Koszul differentials in $\mathbb{K}(R)$, then the {Koszul cohomology groups of $R$} are defined to be
\begin{equation*}\label{E:koszul-cohomology}
K_{p,q}(R):=\HH_{p}(\mathbb{K}(R))_{p+q} =\ker d^R_{p,q}/\operatorname{im} d^R_{p+1,q-1} \simeq \operatorname{Tor}_{p}^{S}(R, \CC)_{p+q}. 
\end{equation*}

If $\varphi\colon S\to R$  
is a ring homomorphism giving $R$ the structure of a graded $S$-algebra, and $I=\ker(\varphi)$, 
then comparing the Koszul complexes of $S$, $R$, and $I$, 
we obtain in graded degree $p+q$, and homological degrees $p+1,p, p-1$, 
the following commutative diagram of vector spaces
\begin{equation}\label{D:koszul}
\begin{gathered}
\small{
\xymatrix{
\bigwedge^{p+1}V \otimes I_{q-1}  \ar[rr]^-{d^{I}_{p+1,q-1}}\ar@{^{(}->}[d]  & 
&  \bigwedge^{p}V \otimes I_q \ar[rr]^-{d^{I}_{p,q}}\ar@{^{(}->}[d] 
& & \bigwedge^{p-1}V \otimes I_{q+1}\ar@{^{(}->}[d] \\
\bigwedge^{p+1}V \otimes \Sym^{q-1} V  \ar[rr]^-{d_{p+1,q-1}}\ar@{->}[d]  & 
&  \bigwedge^{p}V \otimes \Sym^{q} V \ar[rr]^-{d_{p,q}}\ar@{->}[d] 
& & \bigwedge^{p-1}V \otimes \Sym^{q+1} V\ar@{->}[d] \\
\bigwedge^{p+1}V \otimes R_{q-1} \quad \ar[rr]^-{d^R_{p+1,q-1}} & & \bigwedge^{p}V \otimes R_{q} \ar[rr]^-{d^R_{p,q}}
& &\bigwedge^{p-1}V \otimes R_{q+1}
}}
\end{gathered}
\end{equation}
Suppose now that the following is satisfied:
\begin{assumption}\label{assumption-koszul}
The map $\varphi_{q-1}\colon \Sym^{q-1} V \to R_{q-1}$ is surjective, and $K_{p,q}(R)=0$.
\end{assumption}
Then from Diagram \eqref{D:koszul},  
we obtain a surjection $\ker(d_{p,q}) \twoheadrightarrow \ker (d^R_{p,q})$. 
The decomposition of the Koszul complex of $S$ into $\SL(V)$-representations
is well-known. In particular, $\ker(d_{p,q})$ is an irreducible 
$\SL(V)$-representation $\mathbb S^{\mathbf{\lambda}_{p,q}}(V)$ given by the partition $\mathbf{\lambda}_{p,q}:=(q,\underbrace{1,\dots,1}_{p})$ (see \cite[p.83, Exercise 6.20(c)]{fulton-harris}). With this in mind, we make the following definition:
\begin{definition}
Under Assumption \ref{assumption-koszul}, 
we define the \emph{$(p,q)$-syzygy point}
of $R$ to be the point of the Grassmannian 
\[\Gr\left(\mathbb S^{\mathbf{\lambda}_{p,q}}(V), \dim \ker (d^R_{p,q})\right)\] 
given by the exact sequence $
\mathbb S^{\mathbf{\lambda}_{p,q}}(V) \to 
 \ker d^R_{p,q} \to 0.
$
\end{definition}
By construction, there is a natural $\SL(V)$-action on the set of $(p,q)$-syzygy points of all $S$-algebras $R$ with a fixed value of $\dim \ker (d^R_{p,q})$, 
induced by the $\SL(V)$-action on $\mathbb S^{\mathbf{\lambda}_{p,q}}(V)$.
We now transition to geometry. 

\begin{definition}
\label{D:syzygy} Suppose $X$ is a projective scheme
and $\cL$ is a line bundle on $X$. Set $V=\HH^0(X,\cL)$, $S=\Sym V$, and
let $R(X,\cL)=\oplus_{n\geq 0} \HH^0(X,\cL^n)$ be the graded section ring of $\cL$. 
If Assumption \ref{assumption-koszul} is satisfied for the $S$-algebra $R(X,\cL)$, then the $(p,q)$-syzygy point of
$R(X,\cL)$ will be called the $(p,q)$-syzygy point of $(X,\cL)$, and denoted
$\Sp_{(p,q)}(X,\cL)$, or   $\Sp_{(p,q)}(X)$ if the line bundle $\cL$ is understood. 
\end{definition}
We will be almost exclusively concerned with 
the situation where $\cL$ is a very ample line bundle embedding $X$ 
as a projectively normal subscheme of a projective space $\PP V^{\vee}$
with a homogeneous ideal $I_X \subset S$. 
In this case, the syzygy point $\Sp_{(p,q)}(X)$ is well-defined if and only if $K_{p,q}(X)=0$.  

While there is no a priori reason not to consider all allowable values of $(p,q)$,
we will see that one obtains a lot of mileage out of just two special cases. 
The first one is $p=0$. In this case, we see from Definition \ref{D:syzygy} that the $(0,q)$-syzygy point
of $X$ is the short exact sequence
\[
0 \to (I_X)_{q} \to \Sym^q V \to \HH^0(X, \cL^q) \to 0,
\]
which is exactly the $q^{th}$ Hilbert point of $X\hookrightarrow \PP V^{\vee}$. 
The second case is $q=2$ and arbitrary $p$, which corresponds to 
$p^{th}$-order linear syzygies among the quadrics cutting out $X$. 
Note that in this case
\[
\ker(d_{p,2}^{I_X})=\ker\left( \wedge^{p} V\otimes (I_X)_2  \to \wedge^{p-1} V \otimes (I_X)_3\right)=K_{p,2}(I_X) \simeq K_{p+1,1}(X).
\]
It follows that $\Sp_{(p,2)}(X)$ is a point of $\Grass\left(\dim K_{p+1,1}(X), \ \mathbb S^{\lambda_{p,2}} (V)\right)$,
where \[\mathbb S^{\lambda_{p,2}} (V)= \dfrac{\wedge^{p+1} V \otimes V}{\wedge^{p+2} V}.\]

Having defined syzygy points, we will be guided by a series of the following open ended questions:
\begin{question}\label{Q:syzygy}
When is the $(p,q)$-syzygy point of $X \subset \PP^{r-1}$ semistable with respect to the $\SL(r)$-action?
Are there geometric manifestations of this stability? Is there a reasonable moduli space of schemes with semistable $(p,q)$-syzygy points?  
\end{question}
While GIT and Koszul cohomology date 
back more than $30$ years ago, the problem of GIT stability of syzygy points of projectively normal varieties,
already alluded to by Green \cite[Problem (5.21)]{green}, has not attracted much attention. A possible reason 
for this is the enormous difficulty in verifying stability of syzygy points even for such basic objects as canonical
curves.    
Aside from powerful results of Kempf \cite{kempf} and Luna \cite{luna} 
that allow to bypass the numerical criterion entirely, and 
often to immediately obtain semistability of \emph{all} syzygy points at once, 
there is virtually nothing else that is known about stability of syzygy points when $p\geq 1$, or even when $p=0$ 
and $q$ is small. We summarize these results as follows:

\begin{theorem}[Kempf-Luna semistability criterion]\label{T:kempf-luna}
Let $X\hookrightarrow \PP V^{\vee}$ be a linearly normal subscheme with 
a well-defined $(p,q)$-syzygy point. 
Let $\Stab(X) \subset \SL(V)$ be the stabilizer of $X$. Suppose $G\subset \operatorname{Stab}(X)$ is a reductive subgroup. 
\begin{enumerate}
\item[(a)]
Suppose $V=\HH^0\bigl(X,\cO(1)\bigr)$ is an irreducible representation of $G$.
Then $\Sp_{(p,q)}(X)$  is semistable with respect to the $\SL(V)$-action. 
\item[(b)] $\Sp_{(p,q)}(X)$ is semistable with respect to the $\SL(V)$-action 
if and only if it is semistable 
with respect to the action of the centralizer $C_{\SL(V)}(G)$.
\end{enumerate}
\end{theorem}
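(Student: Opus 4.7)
The plan is to derive both parts from Kempf's theorem on the canonical destabilizing one-parameter subgroup \cite{kempf}. The starting observation is that the entire construction of $\Sp_{(p,q)}(X)$ is functorial in the embedded pair $(X,\cL)$, so every $g\in\Stab(X)$ acts trivially on $\Sp_{(p,q)}(X)$. In particular the reductive subgroup $G\subset\Stab(X)$ fixes the point $\Sp_{(p,q)}(X)$ of the Grassmannian $\Gr\bigl(\mathbb S^{\lambda_{p,q}}(V),\dim K_{p,q}(X)\bigr)$.

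Suppose $\Sp_{(p,q)}(X)$ is unstable with respect to $\SL(V)$. Kempf's theorem produces a parabolic subgroup $P\subset\SL(V)$, canonically attached to $\Sp_{(p,q)}(X)$, and a distinguished conjugacy class of optimal destabilizing one-parameter subgroups $\lambda\co \GG_m\to P$, where the ambiguity is precisely conjugation by $P$. Because $G$ fixes $\Sp_{(p,q)}(X)$ and $P$ is determined by this point, we have $G\subset P$. Reductivity of $G$, together with the Levi decomposition of $P$, lets us conjugate $G$ into a Levi subgroup $L\subset P$ containing the image of $\lambda$; a further conjugation inside $L$ arranges that $\lambda$ commutes with $G$, i.e.\ $\lambda\in C_{\SL(V)}(G)$.

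From this single setup, part (b) is immediate: the argument above exhibits a destabilizing $1$-PS for $\Sp_{(p,q)}(X)$ inside $C_{\SL(V)}(G)$, so $SL(V)$-instability implies $C_{\SL(V)}(G)$-instability; the reverse implication is tautological. Part (a) then follows from a one-line irreducibility argument: the $\lambda$-weight decomposition $V=\bigoplus_{i\in\ZZ} V_i$ is $G$-invariant because $\lambda$ commutes with $G$, so if $V$ is an irreducible $G$-representation then exactly one weight space is nonzero. Thus $\lambda$ acts on $V$ by a single character $t\mapsto t^w$; the condition $\lambda(t)\in\SL(V)$ forces $rw=0$, hence $\lambda$ is trivial, contradicting its destabilizing role. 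So $\Sp_{(p,q)}(X)$ is semistable.

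The only real content is the invocation of Kempf's canonical parabolic and the extraction of a representative $\lambda\in C_{\SL(V)}(G)$; this is the step I expect to be the main obstacle, because it rests on the nontrivial uniqueness of the optimal one-parameter subgroup up to $P$-conjugation and on a careful Levi-decomposition argument for a reductive group fixing the unstable point. Once that is granted, both (a) and (b) are formal consequences.
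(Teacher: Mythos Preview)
Your argument is correct. The paper's own proof consists of two bare citations: Part~(a) to \cite[Cor.~5.1]{kempf} and Part~(b) to \cite[Cor.~2 and Rem.~1]{luna}. You have instead supplied a self-contained argument, deriving both parts uniformly from Kempf's theory of the canonical destabilizing one-parameter subgroup. For~(a) this is essentially Kempf's own proof behind the cited corollary. For~(b) you give a Kempf-style argument (find an optimal $\lambda$ whose Levi contains $G$, hence $\lambda\in C_{\SL(V)}(G)$), whereas the paper defers to Luna, whose route goes through the \'etale slice theorem. Your unified treatment is arguably cleaner, and it makes transparent why irreducibility of $V$ in~(a) is exactly what kills the destabilizing $\lambda$.

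Two small remarks. First, a notational slip: the Grassmannian in which $\Sp_{(p,q)}(X)$ lives is $\Gr\bigl(\mathbb S^{\lambda_{p,q}}(V),\dim\ker d^{R}_{p,q}\bigr)$, not $\Gr\bigl(\mathbb S^{\lambda_{p,q}}(V),\dim K_{p,q}(X)\bigr)$; indeed $K_{p,q}(X)=0$ is the hypothesis that makes the point well-defined. This does not affect your argument. Second, your phrasing ``conjugate $G$ into a Levi \ldots\ a further conjugation inside $L$'' is slightly awkward: it is cleaner to fix $G$ and instead choose, within the $P$-conjugacy class of optimal one-parameter subgroups, a representative $\lambda$ whose associated Levi $L(\lambda)=Z_{\SL(V)}(\lambda)$ contains $G$; then $\lambda$ automatically centralizes $G$ with no further conjugation needed.
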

 
\begin{proof} Part (a) follows from \cite[Cor. 5.1]{kempf} and Part (b) from \cite[Cor. 2 and Rem. 1]{luna}.
\end{proof}

\section{Families of polarized varieties}
\label{S:families}
In this section, we recall the Cornalba-Harris method \cite{CH} of producing effective divisors
on families of polarized varieties using GIT stability of a generic fiber.
Suppose $\pi\colon \cX \to T$ is a flat family of projective schemes,
over an integral base $T$, and $\cL$ is a line bundle on $\cX$.
We assume further that: 
\begin{assumption}\label{assumption-family}
The sheaves $R^k\pi_*(\cL^i)$ are locally free for all $i\geq 1$ and $k\geq 0$.
For a generic $t\in T$,  the line bundle $\cL_{t}$ is very 
ample on the fiber $X_t$ and embeds $X_t$ as a projectively normal subscheme of 
$\PP \HH^0(X_t, \cL_t)^{\vee}$. 
\end{assumption}
Our main applications will be to the examples 
where $\pi\colon \cX \to T$ is an exhausting family of a (often Deligne-Mumford)
stack of polarized varieties of a certain type (e.g., canonical curves or K3 surfaces), and so the reader can safely 
specialize to that case in what follows. 


Set $r:=\rank(\pi_*\cL)$. Possibly after passing to a finite cover of $T$, we can assume
that the line bundle $\det(\pi_*\cL)$ is divisible by $r$ in $\Pic(T)$. 
We then choose a line bundle $\cQ\in \Pic(T)$ such that $\cQ^{r}\simeq
\det\bigl(\pi_*\cL\bigr)$ and let $\LL:=\cL\otimes \pi^*(\cQ^{-1})$. 
Then 
\[
\det\bigl(\pi_*\LL \bigr)\simeq \cO_T.
\]
Set now $\cE=\cE_1:=\pi_*\LL$ and $\cE_{i}:=\pi_*\LL^{i}$, for $i\geq 2$. 
By Assumption \ref{assumption-family}, we have that $\cE_{i}$ is a locally free sheaf for every $i\geq 1$. 
We note for the future use that 
\begin{equation*}
c_1(\cE_{i})=c_1(\pi_*\cL^i)-i \rank(\cE_i)c_1(\cQ).
\end{equation*}

Consider next the following commutative diagram of locally free sheaves on $T$:
\begin{equation*}
\xymatrix{
\bigwedge^{p+1}\cE \otimes \Sym^{q-1} \cE \ar[r]  \ar@{->}[d] & 
\bigwedge^{p} \cE \otimes \Sym^{q} \cE \ar[r] \ar@{->}[d] & 
\bigwedge^{p-1} \cE \otimes \Sym^{q+1} \cE \ar@{->}[d]  \ar[r]  & 
\cdots \\
\bigwedge^{p+1} \cE \otimes \cE_{q-1} \ar[r] & 
\bigwedge^{p}\cE \otimes \cE_{q} \ar[r] & 
\bigwedge^{p-1}\cE \otimes \cE_{q+1} \ar[r] & 
\cdots.
}
\end{equation*}
Here, the Koszul differential $\bigwedge^{p}\cE \otimes \Sym^{q} \cE \to  
\bigwedge^{p-1}\cE \otimes \Sym^{q+1} \cE$ (resp., $\bigwedge^{p}\cE \otimes \cE_{q} \to \bigwedge^{p-1}\cE \otimes \cE_{q+1}$) is defined by 
\begin{equation*}
x_0\wedge x_1\wedge \cdots\wedge x_{p-1}\otimes y
\mapsto \sum_{i=0}^{p-1} (-1)^i \, x_0\wedge \cdots \wedge \widehat{x_i}\wedge \cdots \wedge x_{p-1}\otimes x_i y,
\end{equation*}
where $\{x_i\}_{i=0}^{p}$ are local sections of $\cE$, $y$ is a local section 
of $\Sym^{q} \cE$ (resp., $\cE_{q}$), and where $x_iy$ is the local section of $\Sym^{q+1} \cE$
(resp., $\cE_{q+1}$) given by a natural multiplication map.  By cohomology and base change \cite[Corollaire 6.9.9]{EGAIII2}, 
on fibers at $t\in T$, this diagram is nothing but the two bottom rows of Diagram \eqref{D:koszul}
for $R=R(X_t, \cL_t)$.

\begin{definition}\label{D:sheafS} We set
$
\cS_{p,q}:=\ker\left(\bigwedge^{p}\cE \otimes \cE_{q} \to \bigwedge^{p-1}\cE \otimes \cE_{q+1}\right) 
$.  Note that $\cS_{p,q}$ is a coherent sheaf on $T$ such that, by cohomology and base change,
its fiber at $t\in T$ is precisely $\ker(d_{p,q}^{R(X_t, \cL_t)})$. 
\end{definition}

Let $\cU_{p,q} \subset T$ be the open subset of points $t\in T$ such that 
$K_{p-i,q+i}(X_t)=0$ for all $0 \leq i \leq p$, and $\Sym^{q-1} \cE \to \cE_{q-1}$
is surjective.
Then on the open $\cU_{p-1,q+1}\subset T$, we have a long exact sequence of coherent sheaves 
\[
0 \to \cS_{p,q} \to \bigwedge^{p}\cE \otimes \cE_{q} \to \bigwedge^{p-1}\cE \otimes \cE_{q+1} \to \cdots \to  \cE_{p+q} \to 0.
\]
Since $\cE_{i}$'s are locally free by assumption, and $T$ is connected and reduced, 
we conclude that $\cS_{p,q}$ is a locally free sheaf on $\cU_{p-1,q+1}$. 
Moreover,
on the open $\cU_{p,q}\subset \cU_{p-1,q+1}$, we have a surjection 
\begin{equation*}
\mathbb S^{\mathbf{\lambda}_{p,q}}(\cE) \to \cS_{p,q} \to 0,
\end{equation*}
so that $\Sp_{(p,q)}(X_t)$ is well-defined for all $t\in \cU_{p,q}$.


\begin{theorem}\label{T:families}
Suppose that for a generic $t\in \cU_{p,q}$, 
the $(p,q)$-syzygy point of $X_t \subset \PP\HH^0(X_t, \cL_t)^{\vee}$ is semistable with respect to the 
$\SL(r)$-action. Then a positive multiple of the following divisor class
\begin{equation}\label{E:divisor-class}
c_1(\cS_{p,q})=\sum_{i=0}^p (-1)^i \binom{r}{p-i} c_1(\cE_{q+i})
\end{equation}
is effective on $\cU_{p-1,q+1}$.
Moreover, the stable base locus of $c_1(\cS_{p,q})$ on $\cU_{p-1,q+1}$ is contained inside 
\[
\{t \mid \ \text{$\Sp_{(p,q)}(X_t)$ is either undefined, or unstable with respect to the $\SL(r)$-action}\}.
\]
\end{theorem}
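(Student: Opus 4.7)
The plan is to adapt the Cornalba--Harris method \cite{CH} to syzygy points, the crucial normalization being $\det\cE\simeq\cO_T$. On $\cU_{p,q}$, I would package the surjection $\mathbb S^{\lambda_{p,q}}(\cE)\twoheadrightarrow \cS_{p,q}$ as a classifying morphism
\[
\phi\co \cU_{p,q}\longrightarrow \Gr\bigl(\mathbb S^{\lambda_{p,q}}(\cE),k\bigr),\qquad k:=\rank\cS_{p,q},
\]
to the relative Grassmannian of rank-$k$ quotients, whose value at $t$ is by construction $\Sp_{(p,q)}(X_t)$. Under the standard identification of the Plücker line bundle, $\phi^*\cO_{\mathrm{Pl}}(1)=\det\cS_{p,q}$, so the class to be proven effective is $c_1(\phi^*\cO_{\mathrm{Pl}}(1))$.

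The divisor-class identity \eqref{E:divisor-class} is a direct computation from the resolution
\[
0 \to \cS_{p,q} \to \wedge^{p}\cE\otimes\cE_{q} \to \wedge^{p-1}\cE\otimes\cE_{q+1} \to \cdots \to \cE_{p+q}\to 0
\]
on $\cU_{p-1,q+1}$: since $c_1(\cE)=0$, one has $c_1(\wedge^{p-j}\cE\otimes\cE_{q+j})=\binom{r}{p-j}c_1(\cE_{q+j})$ (the cross-term $\binom{r-1}{p-j-1}\rank(\cE_{q+j})c_1(\cE)$ vanishes), and the alternating sum collapses to \eqref{E:divisor-class}.

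The heart of the argument is to convert generic semistability into a global section. The pushforward $(\pi_{\Gr})_*\cO_{\mathrm{Pl}}(N)$ is the vector bundle on $T$ associated, via the frame bundle of $\cE$, to the $\GL(V)$-representation $\HH^0\bigl(\Gr(\mathbb S^{\lambda_{p,q}}(V),k),\cO(N)\bigr)$. Because $\det\cE\simeq\cO_T$, the $\SL(V)$-invariant subrepresentation globalizes to a trivial sub-bundle: every $\SL(V)$-invariant $\sigma$ yields a canonical global section, whose pullback lies in $\HH^0\bigl(\cU_{p,q},\,c_1(\cS_{p,q})^{\otimes N}\bigr)$. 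Semistability at a generic $t_0\in\cU_{p,q}$ produces such a $\sigma$ not vanishing at $\phi(t_0)$, so $\phi^*\sigma$ is a nonzero section and $N\cdot c_1(\cS_{p,q})$ is effective. The base-locus assertion then follows, since the simultaneous zero locus of all $\SL(V)$-invariants, as $N$ varies, is by definition the unstable/undefined locus.

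The step I expect to be most delicate is extending the section from $\cU_{p,q}$ to all of $\cU_{p-1,q+1}$. Although $\det\cS_{p,q}$ is globally defined on the latter (since $\cS_{p,q}$ is locally free of constant rank $k$ there), $\phi$ itself need not extend where the surjection $\mathbb S^{\lambda_{p,q}}(\cE)\twoheadrightarrow \cS_{p,q}$ drops rank. I would handle this by reinterpreting $\sigma$ invariantly as an $\SL(V)$-equivariant polynomial on $\wedge^{k}\mathbb S^{\lambda_{p,q}}(V)$ and evaluating it on the map $\wedge^{k}\mathbb S^{\lambda_{p,q}}(\cE)\to \wedge^{k}\cS_{p,q}=\det\cS_{p,q}$, which is defined on all of $\cU_{p-1,q+1}$. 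This produces a regular section of $(\det\cS_{p,q})^{\otimes N}$ agreeing with $\phi^*\sigma$ on $\cU_{p,q}$---parallel to the extension step in \cite{CH}---and completes the argument.
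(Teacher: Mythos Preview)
Your proposal is correct and follows essentially the same Cornalba--Harris argument as the paper, with the same key ingredients: the normalization $\det\cE\simeq\cO_T$, an $\SL(V)$-invariant polynomial on $\wedge^{k}\mathbb S^{\lambda_{p,q}}(V)$ witnessing semistability, and its globalization to a section of $(\det\cS_{p,q})^{\otimes N}$ via the map $\wedge^{k}\mathbb S^{\lambda_{p,q}}(\cE)\to\det\cS_{p,q}$. The paper's proof skips your intermediate Grassmannian classifying morphism and works directly with this bundle map from the outset, so the ``delicate extension step'' you flag is in fact the paper's entire argument; your explicit derivation of \eqref{E:divisor-class} from the resolution is also more detailed than the paper, which simply asserts it.
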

\begin{proof} This follows by the original argument of Cornalba and Harris \cite{CH}, with only 
cosmetic modifications.
Suppose $X_t$ has a semistable well-defined $(p,q)$-syzygy point, as given by Definition \ref{D:syzygy}. Set
$V=\HH^0(X_t, \cL_t)$, and let 
$W=\bigwedge^{N} \mathbb S^{\mathbf{\lambda}_{p,q}}(V)$, where $N=\dim \ker(d_{p,q}^{X_t})$.
By definition, semistability of $\Sp_{(p,q)}(X_t)$ is equivalent to the existence
of an $\SL(r)$-invariant polynomial $f\in \Sym^{d} W$
that maps to a non-zero element
of $\Sym^d \left(\bigwedge^{N} \cS_{p,q}\right)=\bigl(\det \cS_{p,q}\bigr)^{d}$.

Since $\det \cE \simeq \cO_T$, the $\SL(r)$-invariance of $f$ implies that $f$ is invariant under
the transition matrices of the vector bundle
$\Sym^{d} \left(\wedge^{N} \mathbb S^{\mathbf{\lambda}_{p,q}}(\cE)\right)$.
We conclude that $f$ gives rise to a global section of 
$\Sym^{d} \left(\wedge^{N} \mathbb S^{\mathbf{\lambda}_{p,q}}(\cE)\right)$ that maps to a section
of $\bigl(\det \cS_{p,q}\bigr)^{d}$ not vanishing at $t\in T$. The claim follows.
\end{proof}

\subsection{Summation formulae}
\label{S:summation}
We give a combinatorial summation formula that will be used in this paper to evaluate expressions
arising from Equation \eqref{E:divisor-class}.

\begin{lemma}\label{L:summation} We have
\begin{equation*}
\sum_{i=0}^{p} (-1)^i \binom{r}{p-i}\binom{i}{a}  = (-1)^a \binom{r-1-a}{p-a}.  
\end{equation*}
\end{lemma}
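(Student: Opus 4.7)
The plan is to prove the identity by a formal power series computation, extracting the coefficient of $x^p$ from a product of two generating functions. Using the standard convention $\binom{r}{k}=0$ for $k<0$, and the expansion $(1+x)^r = \sum_{k\geq 0} \binom{r}{k}\,x^k$, the left-hand side of the identity can be rewritten as
\[
\sum_{i=0}^p (-1)^i \binom{r}{p-i}\binom{i}{a} \;=\; [x^p]\,(1+x)^r\cdot \Bigl(\sum_{i\geq 0} (-1)^i \binom{i}{a}\,x^i\Bigr),
\]
since multiplication of power series is exactly convolution of coefficients.

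The key step is to put the second factor in closed form. Applying the familiar identity $\sum_{i\geq 0}\binom{i}{a}z^i = z^a/(1-z)^{a+1}$ (valid as a formal power series for any integer $a\geq 0$) with $z=-x$ yields
\[
\sum_{i\geq 0}(-1)^i\binom{i}{a}\,x^i \;=\; \frac{(-1)^a x^a}{(1+x)^{a+1}}.
\]
Substituting this into the previous display and simplifying gives
\[
[x^p]\,(1+x)^r \cdot \frac{(-1)^a x^a}{(1+x)^{a+1}} \;=\; (-1)^a\,[x^{p-a}]\,(1+x)^{r-a-1} \;=\; (-1)^a\binom{r-1-a}{p-a},
\]
which is the claimed identity; here $(1+x)^{r-a-1}$ is expanded via generalized binomial coefficients, consistent with the convention for $\binom{r-1-a}{p-a}$ used on the right-hand side.

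There is no real obstacle here; the only point to verify is that the formal power series manipulations are legitimate, which is immediate because the relevant series all live in $\mathbb{Z}[[x]]$ (or $\mathbb{Z}[r][[x]]$ if one wants to treat $r$ as a formal variable). As an alternative, one could argue by induction on $p$ using Pascal's rule $\binom{r}{p-i}=\binom{r-1}{p-i}+\binom{r-1}{p-i-1}$ and telescoping, but the generating-function argument above is more direct and avoids case-by-case bookkeeping at the endpoints.
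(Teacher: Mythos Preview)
Your proof is correct and takes essentially the same generating-function approach as the paper. The paper extracts the coefficient of $x^{p-a}$ in $(1-x)^{r}\cdot \frac{d^a}{dx^a}\bigl(\sum_{i\geq 0} x^i\bigr)=a!(1-x)^{r-a-1}$; after the substitution $x\mapsto -x$ and noting that differentiating the geometric series is equivalent to your closed form $\sum_{i\geq 0}\binom{i}{a}z^i=z^a/(1-z)^{a+1}$, this is exactly your computation.
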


\begin{proof}
Indeed, the sum on the left multiplied by $(-1)^{p}a!$ is the coefficient of $x^{p-a}$ in
\[
(1-x)^{r}\frac{d^a(\sum_{i=0}^{\infty} x^i)}{dx^a} =(1-x)^{r} \frac{(-1)^a a!}{(1-x)^{a+1}}=(-1)^a a! (1-x)^{r-a-1}.
\]
\end{proof}

\section{K3 surfaces}
\label{S:k3}

Somewhat surprisingly, we can prove GIT stability for more syzygy points of generic K3 surfaces 
than for syzygy points of any other non-homogeneous variety.  
Namely, we have the following:
\begin{theorem}\label{T:k3} Suppose $g=2k+1 \geq 3$.
Let $X\subset \PP^{g}$ be a generic polarized K3 surface of degree $2g-2$.
Then the syzygy points $\Sp_{(p,q)}(X)$ are semistable with respect to the $\SL(g+1)$-action 
for all $(p,q)$ such $K_{p,q}(X)=0$. In particular, the $q^{th}$ Hilbert points $\Sp_{(0,q)}(X)$ of $X$
are semistable for every $q\geq 2$, and the $p^{th}$-order linear syzygies among quadrics
$\Sp_{(p,2)}(X)$ are semistable for every $1\leq p\leq k-1$. 
\end{theorem}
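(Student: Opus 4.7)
My plan is to apply the Kempf-Luna semistability criterion (Theorem \ref{T:kempf-luna}(a)) at a single highly symmetric K3 surface $X_0$ and then spread semistability to the generic K3 by openness of the GIT semistable locus.

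The crucial step is to exhibit, for each odd $g = 2k+1 \geq 3$, a projectively normal K3 surface $X_0 \subset \PP^g$ of degree $2g-2$ together with a finite (hence reductive) subgroup $G \subset \Stab(X_0)$ such that $V = \HH^0(X_0, \cO(1))$ is an irreducible $G$-representation. Natural candidates are K3 surfaces of Delsarte type with large diagonal-and-permutation symmetry: for $g = 3$, the Fermat quartic $\{\sum_{i=0}^{3} x_i^4 = 0\} \subset \PP^3$ carries an action of $S_4 \ltimes (\mu_4)^3$, and an elementary Schur-lemma argument (the diagonal finite torus acts by four distinct characters, transitively permuted by $S_4$) shows $V = \CC^4$ is $G$-irreducible; analogous $S_{g+1}$-symmetric complete intersections provide $X_0$ for $g = 4, 5$. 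For larger odd $g$, I would invoke the classifications of highly symmetric K3 surfaces by Mukai, Nikulin, and Kondo to select a K3 whose finite reductive stabilizer acts irreducibly on the $(g+1)$-dimensional linear system. Given such an $X_0$, Theorem \ref{T:kempf-luna}(a) immediately yields that $\Sp_{(p,q)}(X_0)$ is semistable for every pair $(p,q)$ with $K_{p,q}(X_0) = 0$.

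To propagate semistability from $X_0$ to a generic K3 surface of genus $g$, I would combine this with two additional inputs. First, the Koszul vanishings $K_{p,q}(X) = 0$ stated in the theorem's two special cases are known for a generic polarized K3 surface: projective normality and the surjections $\Sym^q V \to \HH^0(X, \cL^q)$ for all $q \geq 2$ are classical results of Mayer, Saint-Donat and Mumford, while $K_{p+1,1}(X) = 0$ for $p+1 \leq k$---equivalently $\Sp_{(p,2)}(X)$ is defined for $1 \leq p \leq k - 1$---follows from Voisin's proof of the generic Green conjecture for K3 surfaces. Second, inside the relevant Hilbert scheme component, the locus where $\Sp_{(p,q)}$ is both defined and GIT semistable is Zariski open. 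Since the moduli of polarized K3 surfaces of genus $g$ is irreducible and $X_0$ can be chosen to lie in the closure of the generic stratum, semistability then spreads from $X_0$ to a dense open neighborhood, and hence to a generic K3 surface of genus $g$.

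The main obstacle is the first step: for arbitrary odd $g$, exhibiting an explicit K3 surface $X_0$ that simultaneously has a finite reductive stabilizer acting irreducibly on the $(g+1)$-dimensional linear system \emph{and} deforms to a generic K3 surface of genus $g$. For small $g$ this is elementary, but a uniform treatment across all odd $g \geq 3$ appears to require ad hoc constructions---Mukai's linear section models for $g \leq 12$, symmetric elliptic K3 fibrations, or K3 surfaces with prescribed Nikulin lattices and large automorphism groups---that may not be uniformly available in the literature for every odd $g$.
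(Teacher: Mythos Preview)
Your overall strategy---apply the Kempf-Luna criterion (Theorem \ref{T:kempf-luna}(a)) at a single highly symmetric point, then spread by openness of semistability---is exactly the paper's approach. The genuine gap is in the choice of $X_0$.

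Your proposal to use a \emph{smooth} K3 surface with a \emph{finite} reductive stabilizer acting irreducibly on the $(g+1)$-dimensional space $V$ cannot work uniformly in $g$. The polarized automorphism group of a smooth K3 surface is finite and, by the Mukai--Nikulin--Kond\=o classifications you invoke, its order is bounded independently of $g$. An irreducible representation of a finite group has dimension at most the group order, so once $g+1$ exceeds this uniform bound there is no smooth K3 surface of genus $g$ whose stabilizer acts irreducibly on $V$. You correctly identify this as the main obstacle, and it is in fact fatal to the approach as stated.

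The paper sidesteps this by abandoning smoothness. It takes $X_0 = S_{2k+1}$ to be a \emph{K3 carpet}: the unique non-reduced double structure on the balanced rational normal scroll $\PP^1 \times \PP^1 \hookrightarrow \PP^{2k+1}$ (embedded by $\vert\cO(1,k)\vert$) that is numerically K3. This carpet is a flat degeneration of smooth genus $g$ K3 surfaces by Gallego--Purnaprajna, and crucially it inherits an action of the \emph{infinite} reductive group $\SL(2)\times\SL(2)$ from the reduced scroll. By Borel--Weil, $\HH^0(S_{2k+1},\cO(1)) \simeq \HH^0(\PP^1\times\PP^1,\cO(1,k))$ is an irreducible $\SL(2)\times\SL(2)$-representation for every $k$, so Theorem \ref{T:kempf-luna}(a) applies uniformly. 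One then needs projective normality of $S_{2k+1}$ (direct computation) and the matching of Betti tables $K_{p,q}(S_{2k+1})=0 \Leftrightarrow K_{p,q}(X_{\text{gen}})=0$, which follows from Deopurkar's work on carpets together with Voisin's theorem. The passage to a degenerate, non-reduced model is precisely what allows the symmetry group to grow with $g$.
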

Note that for a generic polarized K3 surface of genus $g$, we have $K_{p,q}(X)=0$ for $q\geq 3$,
except when $(p,q)=(2k-1,3)$, by the self-duality of the Betti table of $X$, and we have $K_{p,2}(X)=0$ for $p\leq k-1$ by Voisin's proof of the generic Green's conjecture \cite{voisin-green-odd}.

Theorem \ref{T:k3} is a generalization (in odd genus only)
of a very recent result of Farkas and Rim\'anyi:
\begin{theorem}[{\cite[Theorem 10.2]{farkas-rimanyi}}]
Let $X$ be a polarized K3 surface of degree $2g-2$ with $\Pic(X)\simeq \ZZ$. 
Then the 2nd Hilbert point $\Sp_{(0,2)}(X)$ is semistable.
\end{theorem}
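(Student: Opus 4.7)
The strategy is to apply the Kempf--Luna criterion (Theorem \ref{T:kempf-luna}(a)) at a specially chosen symmetric K3 surface and then propagate semistability to a generic K3 by openness. Since semistability is an $\SL(g+1)$-invariant open condition, and since the locus $\cU_{p,q}$ in the (irreducible) moduli space of polarized K3 surfaces of genus $g$ where $K_{p,q}(X)=0$ is open by upper semi-continuity of Koszul cohomology dimensions, it suffices to exhibit a single polarized K3 surface $(X_0, L_0)$ of genus $g = 2k+1$ with the following properties: (i) there is a reductive subgroup $G \subset \Stab(X_0) \subset \SL(V)$, with $V = \HH^0(X_0, L_0)$, acting irreducibly on $V$; and (ii) $X_0 \in \cU_{p,q}$ for every $(p,q)$ for which $K_{p,q}$ vanishes on a generic K3. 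Kempf--Luna then yields semistability of $\Sp_{(p,q)}(X_0)$ for all such $(p,q)$, and openness of semistability in the Grassmann bundle of relative syzygy points built in Section \ref{S:families} transports this to a generic K3 surface.

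The construction of $X_0$ is the crux of the plan. Because the automorphism group of a polarized K3 is discrete, the irreducibility requirement forces $G$ to be a finite subgroup of $\Aut(X_0, L_0)$ carrying an irreducible complex representation of dimension $g+1 = 2k+2$. The evenness of this dimension is what distinguishes the odd-genus case and makes it tractable: natural candidates for $G$ include Heisenberg-type extraspecial groups, binary polyhedral groups, and dihedral groups, each of which admits natural even-dimensional irreducible representations. Explicit constructions should depend on $g$: for small odd $g$, one can use highly symmetric hypersurfaces or complete intersections (e.g., Heisenberg-invariant quartic K3s for $g = 3$); for larger odd $g$, one can take $G$-equivariant linear sections of Mukai's homogeneous models of K3 surfaces, with $G$ a subgroup of the ambient automorphism group that preserves the section and acts irreducibly on the embedding linear system.

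The main obstacle will be verifying condition (ii), since upper semi-continuity of Koszul cohomology runs the wrong way: a K3 with large symmetry may a priori acquire unexpected syzygies. The resolution is to let $X_0$ vary within the subfamily $T_G$ of $G$-symmetric K3s inside the moduli space $T$ of polarized K3 surfaces of genus $g$, and to show that $T_G$ meets the open locus $\cU_{p,q}$; a generic member of this intersection is then a valid $X_0$. Nonemptiness of $T_G \cap \cU_{p,q}$ can be checked either by direct computation on an explicit candidate, or by exploiting the $G$-representation-theoretic decomposition of $K_{p,q}(X_0)$ to rule out Koszul classes on character or dimension grounds. Once $X_0$ is in hand, the conclusion is formal: the semistable locus in the Grassmann bundle of $(p,q)$-syzygy points over $\cU_{p,q}$ is open and $\SL(g+1)$-invariant, so semistability at $X_0$ propagates to a generic K3, completing the proof.
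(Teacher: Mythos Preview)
Your strategy—find one K3 where Kempf--Luna applies, then spread by openness—matches the template the paper uses for its own Theorem~\ref{T:k3}, but not the argument the paper attributes to Farkas--Rim\'anyi for the stated result. There is also a real gap in your choice of $X_0$, and the conclusion you would reach is weaker than the theorem in two respects.

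\textbf{The missing construction.} You ask for a \emph{smooth} polarized K3 $(X_0,L_0)$ carrying a finite $G\subset\Aut(X_0,L_0)$ that acts irreducibly on the $(g+1)$-dimensional space $\HH^0(X_0,L_0)$, but you do not construct one, and your candidates do not work uniformly: dihedral and binary polyhedral groups have complex irreducibles of bounded dimension, and Heisenberg-type groups only hit special values of $g+1$. More seriously, finite automorphism groups of polarized K3 surfaces are tightly constrained, so manufacturing an irreducible $(g+1)$-dimensional action on global sections for every odd $g$ is not a matter of casework. The paper's key move in proving Theorem~\ref{T:k3} is to drop smoothness: it takes $X_0=S_{2k+1}$, a \emph{K3 carpet}---the unique non-reduced, numerically-K3 double structure on the balanced scroll $\PP^1\times\PP^1\hookrightarrow\PP^{2k+1}$ embedded by $\vert\cO(1,k)\vert$. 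This degenerate surface is a flat limit of smooth K3s, and $\Stab(S_{2k+1})$ contains the positive-dimensional reductive group $\SL(2)\times\SL(2)$, under which $\HH^0(S_{2k+1},\cO(1))\simeq\HH^0(\PP^1\times\PP^1,\cO(1,k))$ is irreducible of the correct dimension $2(k+1)=g+1$. Theorem~\ref{T:kempf-luna}(a) then applies immediately, and your condition~(ii), namely $K_{p,q}(S_{2k+1})=0$ whenever the generic K3 has it, is supplied by \cite{deopurkar-carpets}. The lesson: to get enough automorphisms, leave the smooth locus.

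\textbf{Scope of the conclusion.} Even with a valid $X_0$, openness yields semistability only for a Zariski-general K3 of \emph{odd} genus; the stated theorem asserts semistability of $\Sp_{(0,2)}(X)$ for \emph{every} K3 with $\Pic(X)\simeq\ZZ$, in \emph{every} genus. The complement of a nonempty open in $\cF_g$ need not lie in the Noether--Lefschetz locus, so ``generic'' does not upgrade to ``$\Pic\simeq\ZZ$'' for free. The Farkas--Rim\'anyi argument, as the paper describes it, does not use Kempf--Luna at all: it exhibits an explicit $\SL(g+1)$-invariant hypersurface in the Grassmannian of $2$nd Hilbert points---the locus of subspaces of quadrics containing one of rank $\leq 4$---and observes that a K3 with $\Pic(X)\simeq\ZZ$ lies on no rank-$4$ quadric, so its Hilbert point misses that hypersurface. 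This is what buys both the pointwise statement and the even-genus case, neither of which your approach (nor the paper's carpet argument) delivers.
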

Farkas and Rim\'anyi obtain their result by considering 
the locus of genus $g$ polarized K3 surfaces whose embedding in $\PP^g$
lies on a rank $4$ quadric. This locus is divisorial on the moduli space, 
and in fact is induced by an $\SL(g+1)$-invariant 
divisor on the Grassmannian parameterizing 2nd Hilbert points of such surfaces. 
It is an interesting problem to find similar geometric divisorial conditions on higher $(p,2)$-syzygy points
of K3 surfaces that would lead to a different proof of their semistability. 

Another instance of Theorem \ref{T:k3} that was previously known is due to Morrison
who proved Hilbert 
stability of every polarized K3 surface $X\subset \PP^g$ with 
$\Pic(X)\simeq \ZZ$  \cite{morrison-K3}. Morrison's result 
covers the case of $p=0$ and $q\gg 0$, but his methods 
are asymptotic in nature and so cannot be used for small values
of $q$ or for positive values of $p$. 

\begin{proof}[Proof of Theorem \ref{T:k3}] 
Define $S_{2k+1}$ to be the closure in $\PP^{2k+1}$ of the image of a non-reduced affine scheme
$\spec \CC[s,t,\varepsilon]/(\varepsilon^2)$
given by the following morphism 
\begin{equation}\label{E:carpet}
\begin{aligned}
x_0&=1, &  x_{k+1}&=s, \\
&\vdots &   &\vdots \\
x_i&=t^i, &  x_{k+i+1}&=st^i+it^{i-1}\varepsilon, \\
\vdots &  & &\vdots \\
x_k&=t^k, &  x_{2k+1}&=st^k+kt^{k-1}\varepsilon. 
\end{aligned}
\end{equation}
One verifies that $S_{2k+1}$ is a projective subscheme of $\PP^{2k+1}$ that 
is covered by four affine charts isomorphic to
$\spec \CC[s,t,\varepsilon]/(\varepsilon^2)$, and has a trivial dualizing line bundle. More precisely, 
$S_{2k+1}$ is a \emph{K3 carpet}, as defined by Bayer and Eisenbud in \cite[Section 8]{BE}.
Moreover, $S_{2k+1}$ is a flat degeneration of smooth $K3$ surfaces of degree $4k$ in $\PP^{2k+1}$,
as shown by  Gallego and Purnaprajna in \cite{k3-carpets}. 
For example, $S_3\subset \PP^3$ is a double quadric given by the equation $(x_0x_4-x_1x_3)^2=0,$
and $S_5$ is a $(2,2,2)$ complete intersection in $\PP^5$ cut out by the quadrics
$$x_0x_2-x_1^2=x_3x_5-x_4^2=x_0x_5+x_2x_3-2x_1x_4=0.$$

Notice that $(S_{2k+1})_{\operatorname{red}}$ is a balanced rational normal scroll embedded in $\PP^{2k+1}$
by the complete linear system $\vert\cO_{\PP^1\times \PP^1}(1,k)\vert$. 
In \cite[Theorem 1.3]{k3-carpets}, Gallego and Purnaprajna also prove that the rational normal scroll supports 
a unique double structure that is numerically $K3$. This uniqueness result implies that the natural action of
$\SL(2) \times \SL(2)$ on $\PP^1\times \PP^1$ extends to the $\SL(2) \times \SL(2)$-action on $S_{2k+1}$,
which can also be seen from our explicit parameterization of $S_{2k+1}$.
It follows by the Borel-Weil theorem that 
$\HH^0\bigl(X,\cO_X(1)\bigr)\simeq \HH^0\bigl(\PP^1\times \PP^1, \cO_{\PP^1\times \PP^1}(1,k)\bigr)$
is an irreducible representation of $\Stab_{\SL(2k+2)}(S_{2k+1})$. 
Applying Theorem \ref{T:kempf-luna},
we conclude that all well-defined syzygy points $\Sp_{(p,q)}(S_{2k+1})$ are semistable. 

A straightforward computation 
shows that $S_{2k+1}$ is projectively normal, which implies that the syzygy point
$\Sp_{(p,q)}(S_{2k+1})$ is well-defined for all $(p,q)$ such that $K_{p,q}(S_{2k+1})=0$. 
It remains to note that $K_{p,q}(S_{2k+1})=0$ if and only if $K_{p,q}(X)=0$ for a generic polarized K3 surface
of genus $2k+1$. This follows from a result of Deopurkar \cite{deopurkar-carpets}, who deduced the vanishing of 
$K_{k-1,2}(S_{2k+1})$ from Voisin's proof of the generic Green's conjecture \cite{voisin-green-odd}. The statement of the theorem 
for a generic K3 surface now follows by openness of semistability.  
\end{proof}

\subsection{Effective divisors on the moduli space of K3 surfaces}
To apply the generic syzygy stability results of Theorem \ref{T:k3} using the framework of
Section \ref{S:families}, we take 
$T=\cF_g$ to be the moduli stack of quasi-polarized K3 surfaces of genus $g$,
parameterizing pairs $(X,L)$, where $X$ is a smooth K3 surface, and $L$
is a big and nef line bundle with $L^2=2g-2$. We then let
$\pi\colon \cX\to T$ be the universal family, and take $\LL$ to be the universal 
polarization on $\cX$ normalized so that $c_1(\pi_* \LL)=0$. 
Having introduced $\pi \colon \cX \to T$ and $\LL$, we 
follow notation of Section \ref{S:families}, and in particular consider the sheaves $\cS_{p,q}$
on $\cF_g$ introduced in Definition \ref{D:sheafS}.

Before proceeding, we need to recall some standard results on tautological classes on $\cF_g$ from \cite{farkas-rimanyi} and \cite{MOP},
whose notation we follow.  First, the tautological sheaves $\cE_i:=\pi_*(\LL^i)$
are locally free, of rank $2+i^2(g-1)$. Second, by \cite[Proposition 9.2]{farkas-rimanyi}, we have
\begin{equation}\label{E:k3-hodge}
c_1(\cE_i)=\frac{i}{12}\kappa_{1,1}+\frac{i^3}{6}\kappa_{3,0}-\left(\frac{(g-1)i^2}{2}+1\right)\lambda,
\end{equation}
where $\kappa_{a,b}=\pi_*\bigl(c_1(\LL)^{a}\cdot c_2(\mathcal T_{\pi})^{b}\bigr)$,
and $\mathcal T_{\pi}$ is the relative tangent sheaf $\pi\colon \cX\to T$.
By our choice of normalization, we have $c_1(\pi_* \LL)=0$. Thus $\kappa_{1,1}=(6g+6)\lambda-2\kappa_{3,0}$,
and so in terms of the canonical tautological divisors $\lambda$ and $\gamma=\kappa_{3,0}-\frac{g-1}{4}\kappa_{1,1}$, we have 
\begin{align}
\kappa_{1,1} &=12\lambda-\frac{4}{g+1}\gamma, \\
\kappa_{3,0}&=3(g-1)\lambda+\frac{2}{g+1}\gamma. \label{E:k30}
\end{align}

We begin with an effectivity result that holds uniformly over the whole moduli space.
\begin{theorem}\label{T:k3-hilbert} Suppose $g\geq 3$ is odd. For every $q\geq 2$, the divisor class
\[
c_1(\cE_q)=\frac{q}{12}\kappa_{1,1}+\frac{q^3}{6}\kappa_{3,0}-\left(\frac{(g-1)q^2}{2}+1\right)\lambda
\]
is effective on $\cF_g$. In particular, $\kappa_{3,0}$ is pseudoeffective. 
\end{theorem}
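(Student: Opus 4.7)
The plan is to invoke the Cornalba-Harris effectivity principle (Theorem \ref{T:families}) in its simplest instance $p=0$, applied to the universal family $\pi\colon\cX\to\cF_g$. By the remark following Definition \ref{D:syzygy}, the $(0,q)$-syzygy point of $(X,\cL)$ coincides with its $q$th Hilbert point, and Theorem \ref{T:k3} supplies the required semistability: for a generic polarized K3 surface $X\subset\PP^g$ of odd genus $g=2k+1\geq 3$ and every $q\geq 2$, the point $\Sp_{(0,q)}(X)$ is semistable under the natural $\SL(g+1)$-action. Assumption \ref{assumption-family} is satisfied on $\cF_g$: a generic quasi-polarized K3 surface of genus $g\geq 3$ is projectively normal in $\PP^g$ by Saint-Donat's theorem, and each $\cE_i=\pi_*\LL^i$ is locally free of rank $2+i^2(g-1)$ by standard cohomology vanishing on K3 surfaces.

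The key numerical observation that makes the resulting effectivity global, rather than merely generic, is that for $p=0$ the target of the Koszul differential in Definition \ref{D:sheafS} is $\bigwedge^{-1}\cE\otimes\cE_{q+1}=0$, so $\cS_{0,q}=\cE_q$ is locally free on \emph{all} of $\cF_g$. Consequently, the $\SL(g+1)$-invariant polynomial produced in the proof of Theorem \ref{T:families} gives rise to a global section of $(\det\cE_q)^d$ that is non-vanishing at a generic point of $\cF_g$, proving that $c_1(\cE_q)$ is effective. Substituting into Equation \eqref{E:k3-hodge} yields the claimed explicit formula.

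For pseudoeffectivity of $\kappa_{3,0}$, divide by $q^3$:
$$\frac{c_1(\cE_q)}{q^3}=\frac{1}{6}\kappa_{3,0}+\frac{1}{12q^2}\kappa_{1,1}-\left(\frac{g-1}{2q}+\frac{1}{q^3}\right)\lambda,$$
so these effective $\QQ$-classes converge to $\tfrac{1}{6}\kappa_{3,0}$ in the rational N\'eron-Severi space as $q\to\infty$. Since the pseudoeffective cone is closed, the limit $\kappa_{3,0}$ is pseudoeffective. The main subtlety, rather than a genuine obstacle, lies in ensuring that the effectivity extends from the generic open to all of $\cF_g$; this works precisely because $\cS_{0,q}=\cE_q$ is globally locally free, whereas for $p\geq 1$ one would a priori only obtain effectivity on the smaller open $\cU_{p-1,q+1}$.
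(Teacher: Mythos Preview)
Your proof is correct and follows essentially the same route as the paper: both apply Theorem \ref{T:families} with $p=0$ using the generic semistability from Theorem \ref{T:k3}, observe that $\cS_{0,q}=\cE_q$ is locally free on all of $\cF_g$ so that the effectivity is global, and then let $q\to\infty$ to obtain pseudoeffectivity of $\kappa_{3,0}$. Your explicit justification that $\cU_{-1,q+1}=\cF_g$ (because the Koszul differential has zero target when $p=0$) spells out a point the paper leaves implicit, but the argument is otherwise identical.
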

\begin{proof} Consider the vector bundle $\cS_{0,q}=\cE_q$ on $\cF_g$. 
By Theorem \ref{T:k3}, a generic deformation of the K3 carpet $S_{2k+1}$ 
has a semistable $(0,q)$-syzygy point. Theorem \ref{T:families} now implies that $c_1(\cS_{0,q})$
is an effective $\QQ$-line-bundle on $\cF_g$. Moreover, the closure of the stable base locus 
of $c_1(\cS_{0,q})$ inside the stack of all smoothable
numerically K3 surfaces does not contain $[S_{2k+1}]$. 

By taking $q\to \infty$, we see that $\kappa_{3,0}$ lies in the closure of the effective cone of $\cF_{g}$.
\end{proof}

When $q=2$, we find that $c_1(\cE_2)$ is proportional to 
$[D_{g}^{\operatorname{rk} 4}]$, the class of the Farkas-Rim\'anyi 
divisor of K3 surfaces lying on rank $4$ quadrics; 
see \cite[Theorem 3.1]{farkas-rimanyi} and the surrounding
discussion for the definition and properties of this geometric divisor. 
Theorem \ref{T:k3-hilbert} however produces a different
effective divisor spanning the same ray as $[D_{g}^{\operatorname{rk} 4}]$ in the effective cone of $\cF_g$,
as the following result illustrates.
\begin{corollary}\label{C:k3-hilbert} Suppose $g\geq 5$ is odd. 
The Kodaira-Itaka dimension of the divisor class
\[
c_1(\cE_2)=(2g-1)\lambda+\frac{2}{g+1}\gamma
\]
on $\cF_{g}$ is at least $1$.  
\end{corollary}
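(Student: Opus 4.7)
The plan is to produce two effective divisors on $\cF_g$ whose classes are both positive rational multiples of $c_1(\cE_2)$ but which are not proportional as divisors. This immediately yields $h^0(\cF_g, N c_1(\cE_2)) \geq 2$ for some $N \geq 1$, and hence Kodaira--Itaka dimension at least $1$. First, I would take the Farkas--Rim\'anyi divisor $D_{g}^{\operatorname{rk} 4}$: by \cite[Theorem 3.1]{farkas-rimanyi} it is effective, and as noted in the discussion preceding the statement, $[D_{g}^{\operatorname{rk} 4}]$ is a positive rational multiple of $c_1(\cE_2)$. Second, I would extract from the proof of Theorem \ref{T:k3-hilbert} a second effective $\QQ$-divisor $D_{\operatorname{GIT}}$ with $[D_{\operatorname{GIT}}]$ proportional to $c_1(\cE_2)$, with the crucial property that the K3 carpet $[S_{2k+1}]$ does not lie in its support.

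To separate these two divisors, I would verify that $[S_{2k+1}]$ does lie in the closure of $D_{g}^{\operatorname{rk} 4}$, by exhibiting a rank at most $4$ quadric that contains $S_{2k+1}$. This is immediate from the parameterization \eqref{E:carpet}: for $k \geq 2$, the coordinates $x_0 = 1$, $x_1 = t$, $x_2 = t^2$ satisfy $x_0 x_2 - x_1^2 \equiv 0$ identically on $S_{2k+1} \subset \PP^{2k+1}$, and $x_0 x_2 - x_1^2$ is a rank $3$ quadratic form in the coordinates of $\PP^{2k+1}$. After clearing denominators so that integer multiples of $D_{g}^{\operatorname{rk} 4}$ and $D_{\operatorname{GIT}}$ lie in a common complete linear system $|N c_1(\cE_2)|$, the two divisors have distinct supports (only the Farkas--Rim\'anyi one contains the carpet), so the associated global sections are linearly independent.

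The main obstacle I anticipate is bookkeeping around compactifications: $\cF_g$ is not proper and $[S_{2k+1}]$ lies outside it, so one must work on the stack of smoothable numerical K3 surfaces used in the proof of Theorem \ref{T:k3-hilbert} (or on an appropriate compactification of $\cF_g$) and ensure that the two divisors give rise to linearly independent sections of $c_1(\cE_2)^{\otimes N}$ on $\cF_g$ itself rather than differing only along the boundary. Once this is settled, the rest reduces to the formal fact that two distinct effective divisors in a common complete linear system span a two-dimensional space of sections.
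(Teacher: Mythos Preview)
Your strategy coincides with the paper's: exhibit two effective $\QQ$-divisors in the ray spanned by $c_1(\cE_2)$ --- the Farkas--Rim\'anyi divisor $D_g^{\operatorname{rk}4}$ and a GIT-produced one --- and show they are distinct. You have also already done the relevant computation, namely that $x_0x_2-x_1^2$ vanishes on $S_{2k+1}$ for $k\geq 2$, so the carpet lies on a quadric of rank $\leq 4$. The only divergence from the paper is in how you convert this into linear independence of sections on $\cF_g$.

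The compactification route you anticipate is harder than necessary: you would need the two effective divisors to extend to the \emph{same} class on the compactification (that is, with identical boundary contributions), and this is not automatic --- it is exactly the sort of boundary discrepancy you worry about, and resolving it would require genuine additional input. The paper avoids this entirely by moving the separating point into $\cF_g$. Since semistability of the second Hilbert point is an open condition and holds at $S_{2k+1}$, and since $S_{2k+1}$ already lies on rank~$4$ quadrics, there exists a \emph{smooth} deformation $X$ of the carpet that both has semistable $\Sp_{(0,2)}(X)$ and lies on a rank~$4$ quadric. Then $[X]\in D_g^{\operatorname{rk}4}\subset\cF_g$, while by the base-locus statement in Theorem~\ref{T:families} some effective multiple of $c_1(\cE_2)$ on $\cF_g$ avoids $[X]$. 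The two divisors are thus separated by a point of $\cF_g$ itself, and no passage to a compactification is needed.
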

\begin{proof}
Since the K3 carpet $S_{2k+1}$ lies on many rank $4$ quadrics as soon as $k\geq 2$, there exists
a smooth deformation $X$ of $S_{2k+1}$ such that $\Sp_{(0,2)}(X)$ is semistable and 
$X$ lies on a rank $4$ quadric. It then follows by Theorem \ref{T:families} that
there is an effective multiple of the divisor class $c_1(\cE_2)$ that avoids $[X]\in \cF_g$. 
The claim follows.
\end{proof}

More generally, the generic semistability result of Theorem \ref{T:k3} shows
that $c_1(\cS_{p,q})$ is an effective divisor class 
on the open locus $U_{p-1,q+1} \subset \cF_{g}$. 
While we will not pursue a detailed discussion of these effectivity results, we at least compute the resulting 
divisor class. By Equations \eqref{E:divisor-class} and \eqref{E:k3-hodge}, we have
\begin{equation*}
c_1(\cS_{p,q}) =\sum_{i=0}^p (-1)^i \binom{g+1}{p-i} \left(\frac{(q+i)}{12}\kappa_{1,1}+\frac{(q+i)^3}{6}\kappa_{3,0}-\left(\frac{(g-1)(q+i)^2}{2}+1\right)\lambda\right).
\end{equation*}
Using summation formulae of Lemma \ref{L:summation}, 
this evaluates to an unwieldy 
\begin{multline}\label{E:K3-slopes}
c_1(\cS_{p,q}) =\frac{1}{12}\left(q\binom{g}{p}-\binom{g-1}{p-1} \right)\kappa_{1,1} \\ 
+\frac{1}{6}\left(q^3\binom{g}{p}-(3q^2+3q+1)\binom{g-1}{p-1}+(6q+6)\binom{g-2}{p-2}-6\binom{g-3}{p-3}\right)\kappa_{3,0} \\
- \frac{g-1}{2}\left(q^2\binom{g}{p}-(2q+1)\binom{g-1}{p-1}+2\binom{g-2}{p-2}\right)\lambda-\binom{g}{p}\lambda.
\end{multline}

For $q=2$, the above formula simplifies significantly yielding
\begin{equation}\label{E:Sp2}
c_1(\cS_{p,2})=\binom{g-2}{p}\left(\left(1 - \frac{p}{g-2}\right)\kappa_{3,0} 
-\left(g-1-\frac{g - 1}{g - p - 1}\right) \lambda\right).
\end{equation}
We remark that $\cS_{p,2}$ coincides with the sheaf $\mathcal{G}_{p,2}$
from \cite[Section 9]{farkas-rimanyi}.
Recall also from \cite[Section 9.2]{farkas-rimanyi} 
that there exists a Noether-Lefschetz divisor $D_{1,1} \subset \cF_{g}$ such that
for every $(X,L)\in \cF_{g}\setminus D_{1,1}$ the line bundle $L$ is base-point-free on $X$.
\begin{corollary}\label{C:Sp2} Suppose $g=2k+1$. Then for every $0\leq p\leq k-1$, the following
divisor class is effective on $\cF_{g}\setminus D_{1,1}$:
\begin{equation}\label{E:Sp2-2}
\frac{(g-1)(2g-3p-1)}{g-p-1}\lambda
+\frac{2}{g+1}\gamma.
\end{equation}
\end{corollary}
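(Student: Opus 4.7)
The plan is to apply Theorems \ref{T:k3} and \ref{T:families} to the universal family $\pi \co \cX \to \cF_g$ of quasi-polarized K3 surfaces with $(p,q) = (p,2)$, and then to convert the resulting effective class $c_1(\cS_{p,2})$ into the $(\lambda,\gamma)$-basis using \eqref{E:k30}. The generic semistability input is already available: for $g = 2k+1$ and $0 \leq p \leq k-1$, Voisin's generic Green vanishing combined with Deopurkar's degeneration argument gives $K_{p,2}(S_{2k+1}) = 0$, so by the $\SL(2)\times\SL(2)$-symmetry of the K3 carpet and Theorem \ref{T:kempf-luna} the point $\Sp_{(p,2)}(S_{2k+1})$ is semistable under $\SL(g+1)$, and openness transfers this to a generic deformation of $S_{2k+1}$ in $\cF_g$.

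Next I would pin down the locus on which Theorem \ref{T:families} delivers effectivity. The projective normality of $S_{2k+1}$ together with the vanishing of the relevant higher Koszul groups in a neighborhood shows that $\cU_{p-1,3}$ is a nonempty open subset of $\cF_g \setminus D_{1,1}$, the latter being the open locus on which $\cS_{p,2}$ is locally free and coincides with the sheaf $\mathcal{G}_{p,2}$ of \cite{farkas-rimanyi}. Applying Theorem \ref{T:families} yields a positive multiple of $c_1(\cS_{p,2})$ effective on $\cU_{p-1,3}$, and the corresponding effective divisor then extends by closure to $\cF_g \setminus D_{1,1}$.

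Finally I would do the arithmetic. Starting from \eqref{E:Sp2} and substituting $\kappa_{3,0} = 3(g-1)\lambda + \frac{2}{g+1}\gamma$, the $\gamma$-coefficient of $c_1(\cS_{p,2})$ becomes $\binom{g-2}{p}\cdot\frac{g-p-2}{g-2} \cdot \frac{2}{g+1}$, while the $\lambda$-coefficient collects into $\binom{g-2}{p}(g-1)(g-p-2)\Bigl[\frac{3}{g-2} - \frac{1}{g-p-1}\Bigr]$, which simplifies over the common denominator $(g-2)(g-p-1)$ to $\binom{g-2}{p}\cdot\frac{(g-1)(g-p-2)(2g-3p-1)}{(g-2)(g-p-1)}$. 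Pulling out the strictly positive scalar $\binom{g-2}{p}(g-p-2)/(g-2)$, whose positivity uses $0 \leq p \leq k-1 \leq g-3$, presents \eqref{E:Sp2-2} as a positive rational multiple of $c_1(\cS_{p,2})$, whence its effectivity on $\cF_g \setminus D_{1,1}$.

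The main obstacle is Step 2: rigorously checking that $\cU_{p-1,3}$ is dense in $\cF_g \setminus D_{1,1}$ and that the complement carries no components on which the extension of $c_1(\cS_{p,2})$ could fail to be effective. Granted this, everything else reduces to linear algebra in the tautological ring of $\cF_g$.
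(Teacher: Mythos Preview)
Your overall strategy coincides with the paper's: invoke Theorem \ref{T:k3} for generic semistability, feed it into Theorem \ref{T:families}, and then substitute \eqref{E:k30} into \eqref{E:Sp2}. Your arithmetic in the final step is correct.

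The obstacle you flag in Step 2 is not a genuine difficulty, but your proposed workaround (extend the effective divisor by closure) is not a valid fix. The closure in $\cF_g\setminus D_{1,1}$ of an effective divisor on $\cU_{p-1,3}$ is certainly effective, but it represents $c_1(\cS_{p,2})$ only modulo the divisorial components of the complement $(\cF_g\setminus D_{1,1})\setminus \cU_{p-1,3}$; without controlling those, you cannot conclude that the specific class \eqref{E:Sp2-2} is effective.

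The paper's resolution is that there is no complement: one has $\cU_{p-1,3}=\cF_g\setminus D_{1,1}$ on the nose. For every $(X,L)\in \cF_g\setminus D_{1,1}$ the line bundle $L$ is base-point-free, and then the self-duality of the Betti table of a polarized K3 surface forces $K_{i,q}(X)=0$ for all $q\geq 3$ except $(i,q)=(g-2,3)$. Since $p-1\leq k-2<g-2$, this gives $K_{p-1,3}(X)=\cdots=K_{0,p+2}(X)=0$ for every such $(X,L)$, and projective normality supplies the surjectivity condition. Hence $\cS_{p,2}$ is locally free on all of $\cF_g\setminus D_{1,1}$ and Theorem \ref{T:families} applies there directly, with no extension argument needed.
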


\begin{proof}
We only need to observe that by the self-duality of the Betti table, every $(X,L)\in \cF_{g}\setminus D_{1,1}$ satisfies $K_{p-1,3}(X)=\cdots=K_{0,p+2}(X)=0$. It follows that $\cS_{p,2}$ is a locally free sheaf on $\cF_{g}\setminus D_{1,1}$.
The claim now follows from Theorem \ref{T:k3} and Theorem \ref{T:families} by substituting \eqref{E:k30}
into \eqref{E:Sp2}.
\end{proof}

For $p=0$, the divisor class of Corollary \ref{C:Sp2} reduces to that of Corollary \ref{C:k3-hilbert} for $q=2$.
For $p=(g-3)/2$, the divisor class of Corollary \ref{C:Sp2} is proportional to the Koszul divisor $\mathfrak{Kosz}_g$
of Farkas and Rim\'anyi \cite[Theorem 9.5]{farkas-rimanyi}; this of course is immediate from the fact the 
$(k-1,2)$-syzyzy
point of the generic K3 surface $X$ of genus $2k+1$ lives in a $0$-dimensional Grassmannian,
and so the locus where this point is not semistable is precisely $\mathfrak{Kosz}_g$, a degeneracy locus
of a map between two vector bundles of the same rank over $\cF_g$.

\begin{remark}
While it is likely that none of the effective divisors we produce on $\cF_g$ are new, we note that 
our proof of the effectivity of the divisor classes given by Theorem \ref{T:k3-hilbert} does not rely on 
the knowledge of the Betti tables for generic K3 surfaces and so is completely independent of 
the Green's conjecture (Voisin's theorem).
\end{remark}

\section{Canonical curves}
\label{S:canonical}
Having established GIT semistability of syzygy points of generic K3 surfaces (of odd genus), it is natural 
to expect that we can prove an analogous result for their hyperplane sections, namely, canonical curves. Unfortunately, this is outside
of our reach at the moment, so we will be content with stating several conjectures and drawing 
some consequences out of them.  We begin with the following:
\begin{conjecture}\label{conj-can} Suppose $C\subset \PP^{g-1}$ is a generic canonical curve. Then the $(p,2)$-syzygy 
point $\Sp_{(p,2)}(C)$ is GIT semistable for all $p\geq \lfloor (g-3)/2\rfloor$. 
\end{conjecture}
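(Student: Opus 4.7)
The natural plan is to mimic the proof of Theorem~\ref{T:k3}, specializing the generic canonical curve $C$ to a limit $C_0\subset \PP^{g-1}$ with enough automorphisms to trigger the Kempf--Luna criterion of Theorem~\ref{T:kempf-luna}. The optimal candidate is the Bayer--Eisenbud canonical ribbon: the unique canonically embedded double structure on the rational normal curve of degree $g-1$ in $\PP^{g-1}$, with $\omega_{C_0}\simeq \cO_{C_0}(1)$. In odd genus $g=2k+1$ one can alternatively realize $C_0$ as a generic hyperplane section of the K3 carpet $S_{2k+1}$ of Theorem~\ref{T:k3}, which allows one to transfer Koszul data from the carpet to $C_0$ via a Lefschetz principle.

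The plan then proceeds as follows. First, by Bayer--Eisenbud, $C_0$ is projectively normal and is a flat limit of smooth canonical curves of genus $g$. Second, the natural $\SL(2)$-action on the underlying rational normal curve extends to $C_0$, giving $\SL(2)\subset \Stab_{\SL(g)}(C_0)$, and the induced representation $\HH^0(C_0,\omega_{C_0})\simeq \Sym^{g-1} V$ (with $V$ the standard $\SL(2)$-module) is irreducible; hence Theorem~\ref{T:kempf-luna}(a) yields that every well-defined syzygy point $\Sp_{(p,q)}(C_0)$ is $\SL(g)$-semistable. Third, verify the Koszul vanishing $K_{p,2}(C_0)=0$ in the range $p\geq \lfloor(g-3)/2\rfloor$ for which the conjecture is asserted, so that $\Sp_{(p,2)}(C_0)$ is well-defined and hence semistable by the second step. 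The conjecture then follows from openness of semistability together with semicontinuity of Koszul cohomology on the universal family of canonical curves.

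The main obstacle is the third step. In odd genus $g=2k+1$, one would hope to deduce $K_{k-1,2}(C_0)=0$ from the vanishing $K_{k-1,2}(S_{2k+1})=0$ (established by Deopurkar from Voisin's theorem, as used in Theorem~\ref{T:k3}) via a Lefschetz-type short exact sequence relating Koszul cohomology of the K3 carpet to that of its hyperplane section; care is required in verifying the relevant base-change issues on both the carpet and the ribbon side, where the standard smooth arguments do not apply verbatim. In even genus no analogous K3 carpet is available, and establishing $K_{k-2,2}(C_0)=0$ for the canonical ribbon of genus $2k$ is essentially the Bayer--Eisenbud ribbon form of Green's conjecture in even genus; while Voisin's smooth-curve theorem is known, transporting it to the ribbon limit appears to require independent argument. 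Thus the odd-genus case of Conjecture~\ref{conj-can} seems well within reach via this plan, whereas the even-genus case will likely require an additional ingredient, such as a different symmetric degeneration admitting a larger torus action (to which Theorem~\ref{T:kempf-luna}(b) could be applied) or a direct Koszul-theoretic computation on the ribbon.
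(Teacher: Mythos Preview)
The statement you are attempting to prove is a \emph{conjecture}; the paper does not prove it and explicitly records it as open beyond $p=0$ (both parities) and $p=1$ (odd genus). More importantly, your proposed argument contains a fatal error in its second step.

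Your claim that the $\SL(2)$-action on the underlying rational normal curve extends to the canonical ribbon $C_0$, so that $\HH^0(C_0,\omega_{C_0})\simeq \Sym^{g-1}V$ becomes an irreducible $\SL(2)$-representation, is false. Rational ribbons of genus $g$ on $\PP^1$ are classified, up to isomorphism, by points of $\PP\bigl(\HH^0(\PP^1,\cO(g-3))^\vee\bigr)$ together with the split ribbon corresponding to the zero extension class. The $\SL(2)$-action on $\PP^1$ acts on this parameter space through the irreducible representation $\Sym^{g-3}\CC^2$, which has no nonzero fixed vector for $g\geq 4$. Hence the only rational ribbon of genus $g$ carrying an $\SL(2)$-action is the split one; but the split ribbon is hyperelliptic (its canonical map factors through $\PP^1$), so it is not canonically embedded and cannot serve as your $C_0$. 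Every canonically embedded rational ribbon has at most a $\GG_m$ in its stabilizer, exactly as the paper states when it introduces the balanced ribbon $R_{2k+1}$ as ``the unique canonical ribbon \dots\ with a $\GG_m$-action and maximal Clifford index.'' With only $\GG_m$ available, Theorem~\ref{T:kempf-luna}(a) is useless (no irreducible $\GG_m$-representation has dimension $g>1$); part~(b) reduces the problem to verifying semistability against the full centralizer of $\GG_m$, a maximal torus of $\SL(g)$. That torus computation is precisely Conjecture~\ref{conj-ribbon}, which the paper records as open for $p\geq 2$.

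You have also misidentified the obstacle. You flag the Koszul vanishing $K_{p,2}(C_0)=0$ as the ``main obstacle,'' but in odd genus this vanishing for the balanced ribbon \emph{is} known: the paper notes that $K_{k-1,2}(R_{2k+1})=0$ follows from Deopurkar's work together with Voisin's theorem, so all the relevant syzygy points of $R_{2k+1}$ are well-defined. The genuine obstacle is exactly the step you treated as automatic, namely establishing semistability of $\Sp_{(p,2)}(R_{2k+1})$ without the crutch of an irreducible $\Stab$-representation on $V$.
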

Aside from some low genus cases discussed below, Conjecture \ref{conj-can} is known only in two instances.
The first is that of the 2nd Hilbert point ($p=0$), which is established in two different ways in \cite{AFS-stability} and \cite{fedorchuk-jensen}. The second is that of the first linear syzygies among quadrics ($p=1$)  which is proved in \cite{DFS-syzygy}
for generic curves of odd genus. The proof in \cite{DFS-syzygy} proceeds by considering syzygies
of a special canonically embedded  non-reduced Gorenstein curve $R_{2k+1}$ of genus $g=2k+1$,
called a \emph{balanced canonical ribbon}. Considered earlier in \cite{AFS-stability}, 
$R_{2k+1}$ is the unique canonical ribbon in the sense of \cite{BE} 
with a $\GG_m$-action and maximal Clifford index. Explicitly, $R_{2k+1}$ is the section 
of the K3 carpet $S_{2k+1}$ (defined by Equation \eqref{E:carpet}) by the hyperplane $x_{k}-x_{x+1}=0$. 
Computer computations
for small values of $k$ give a hope that in fact the following more precise conjecture is true:
\begin{conjecture}\label{conj-ribbon}
The  balanced canonical ribbon $R_{2k+1}$ has a semistable syzygy point $\Sp_{(p,2)}(R_{2k+1})$ 
for all $2\leq p\leq k-1$.
\end{conjecture}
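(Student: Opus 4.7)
The plan is to apply the Kempf--Luna semistability criterion \ref{T:kempf-luna}(b) to a reductive subgroup of $\Stab(R_{2k+1})$, mirroring the strategy used in Theorem~\ref{T:k3} and in the cases $p=0,1$ treated in \cite{AFS-stability, DFS-syzygy}. The ribbon $R_{2k+1}$ inherits from its realization as the hyperplane section $\{x_k=x_{k+1}\}\cap S_{2k+1}$ a two-dimensional torus $T$ of automorphisms, obtained by independently rescaling $s$ and $t$ (with $\varepsilon$ transforming accordingly) in the parameterization~\eqref{E:carpet}. Since the resulting $T$-weights on $V=\HH^0(R_{2k+1},\omega_{R_{2k+1}})$ separate the canonical coordinates, the centralizer $C_{\SL(V)}(T)$ is a maximal torus of $\SL(V)$, and Theorem~\ref{T:kempf-luna}(b) reduces the conjecture to a numerical Hilbert--Mumford check for the $T$-action, namely to the assertion that $0$ lies in the $T$-weight polytope of the Pl\"ucker image of $\Sp_{(p,2)}(R_{2k+1})$ inside $\wedge^{N}\mathbb{S}^{\lambda_{p,2}}(V)$, where $N=\dim K_{p+1,1}(R_{2k+1})$.

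The first concrete step is to read off the $T$-weight decomposition of $V$ and of $\mathbb{S}^{\lambda_{p,2}}(V)\simeq (\wedge^{p+1}V\otimes V)/\wedge^{p+2}V$ from the explicit parameterization~\eqref{E:carpet}; this is routine. The second, harder step is to describe $\ker(d^{R_{2k+1}}_{p,2}) \simeq K_{p+1,1}(R_{2k+1})$ as a $T$-representation. Its dimension is controlled by \cite{deopurkar-carpets}, which ensures that the Betti numbers of the balanced ribbon in the range $p\leq k-1$ match the generic ones, but the $T$-weight content is not immediate. A natural route is to exploit the flat degeneration of $R_{2k+1}$ to the balanced rational normal scroll $(R_{2k+1})_{\mathrm{red}}\subset\PP^{2k}$ and to track how the Eagon--Northcott complex of the scroll deforms to a $T$-equivariant description of the linear syzygies of the ribbon, one ribbon-correction at a time via the $\varepsilon$-deformation.

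The principal obstacle, and the reason the conjecture remains open, is verifying the weight-polytope condition uniformly for $2\leq p\leq k-1$. Already for $p=1$, \cite{DFS-syzygy} required a delicate analysis of specific families of Koszul cycles; for larger $p$ the linear syzygies of $R_{2k+1}$ have richer combinatorial structure and no obvious monomial basis diagonalizes the $T$-action with an easily verifiable centroid. A promising strategy would be to degenerate $R_{2k+1}$ further, under a one-parameter subgroup of $T$, to a toric or Stanley--Reisner model whose linear syzygy module is enumerable by combinatorial data, and then to verify the polytope condition there by direct inspection; openness of semistability then transports the conclusion back to $R_{2k+1}$. Alternatively, one could try to leverage Theorem~\ref{T:k3} directly by transferring semistability-detecting invariants from the K3 carpet $S_{2k+1}$ to its hyperplane section $R_{2k+1}$ via the mapping cone relating $K_{p+1,1}(S_{2k+1})$ and $K_{p+1,1}(R_{2k+1})$. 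The subtlety here is that this restriction is only equivariant for the subgroup of $\SL(2k+2)$ stabilizing the hyperplane $\{x_k=x_{k+1}\}$, so one must argue that the resulting invariants generate a rich enough ring on the ribbon side to detect semistability for the full $\SL(2k+1)$-action on $\Sp_{(p,2)}(R_{2k+1})$.
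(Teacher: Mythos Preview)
The statement is a \emph{conjecture} in the paper and is not proved there; the paper only remarks that computer computations for small $k$ support it. Your proposal is likewise explicitly not a proof but a discussion of strategies, and you correctly identify the main obstruction (carrying out the maximal-torus Hilbert--Mumford check uniformly in $p$). The overall shape of the approach you outline---apply Luna's criterion to reduce to a maximal torus, then attempt a combinatorial weight-polytope verification---is indeed the one used in \cite{AFS-stability} and \cite{DFS-syzygy} for $p=0,1$, so there is nothing to compare against beyond that.

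Your setup contains a factual error, however. You claim that $R_{2k+1}$ inherits a two-dimensional torus $T$ from independently rescaling $s$ and $t$ in the parameterization~\eqref{E:carpet}. This is false: since $x_k=t^k$ and $x_{k+1}=s$, the hyperplane $x_k=x_{k+1}$ is preserved only by the one-parameter subgroup with $\lambda=\mu^k$, so only a $\GG_m$ survives in $\Stab(R_{2k+1})$ from this source. The paper itself characterizes $R_{2k+1}$ as the unique canonical ribbon of its genus with a $\GG_m$-action and maximal Clifford index. Fortunately this $\GG_m$ still acts on $V=\HH^0(R_{2k+1},\omega)$ with the $2k+1$ distinct weights $0,1,\dots,2k$, so its centralizer in $\SL(V)$ is a maximal torus and the Luna reduction goes through as you want. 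But you must then check semistability for this full maximal torus, not for ``$T$''; the relevant weight polytope lives in a lattice of rank $2k$, not rank $2$, and your phrase ``$T$-weight polytope'' conflates the two.

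A smaller point: $(R_{2k+1})_{\mathrm{red}}$ is the rational normal curve of degree $2k$ in $\PP^{2k}$, not a surface scroll, so ``balanced rational normal scroll'' is the wrong name for it.
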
 
We note that $K_{k-1,2}(R_{2k+1})=0$ by \cite{deopurkar-carpets}, and so all of the above syzygy points are well-defined.

\subsection{Hassett-Keel program for $\M_g$ and syzygies}
Assuming that Conjecture \ref{conj-can} is true, what could we possibly deduce from it? To answer 
this question using the framework of Section \ref{S:families}, whose notation we keep,
we take $T=\Mg{g}$ and $\pi\colon \cX \to \Mg{g}$ to be the universal curve. 
Then the line bundle $\cL:=\omega_{\pi}$ given by the relative dualizing sheaf
satisfies Assumption \ref{assumption-family}. In particular, $\pi_*{\cL}$ 
is the usual Hodge bundle over $\Mg{g}$, and so $c_1\bigl(\pi_*{\cL}\bigr)=\lambda$.
It follows that $c_1(\cQ)=-\frac{1}{g}\lambda$. Recall that by the Grothendieck-Riemann-Roch, we have
\[
\rank(\pi_*\omega^{n}_{\pi})=\begin{cases} g &  \text{for $n=1$}, \\ (2n-1)(g-1) & \text{for $n\geq 2$},
\end{cases}
\]
and 
\[
c_1(\pi_*\omega^{n}_{\pi})=\binom{n}{2}\kappa+\lambda,
\]
where $\kappa=\pi_*\bigl(c_1(\omega_{\pi})^2\bigr)=12\lambda-\delta$.
Using this, we compute that 
\[
c_1(\cE_n)=\binom{n}{2}\kappa+\lambda-n(2n-1)\frac{(g-1)}{g}\lambda.
\]
We therefore compute, using Lemma \ref{L:summation}, that 
\begin{align*}
c_1(\cS_{p,q}) &=\sum_{i=0}^p (-1)^i \binom{g}{p-i} \left(c_1\bigl(\cE_{q+i}\bigr)-
(q+i)(2(q+i)-1)(g-1)c_1(\cQ)\right) \\
&= \left(\binom{g-3}{p-2}-q\binom{g-2}{p-1}+\binom{q}{2}\binom{g-1}{p}\right)\left(\left(8+\frac{4}{g}\right)\lambda-\delta\right)\\ & \hspace{1pc} -\frac{(g-1)}{g}\left(q\binom{g-1}{p}-\binom{g-2}{p-1}\right)\lambda+\binom{g-1}{p}\lambda.
\end{align*}
We consider the two extreme
cases of $p=0$ and $q=2$. 
When $p=0$, we are of course in the realm of Hilbert points, and so $\cS_{0,q}$ is simply a (twisted) 
$q^{th}$ Hodge bundle over $\Mg{g}$.
We see that 
\[
c_1(\cS_{0,q})=\binom{q}{2}\left(\left(8+\frac{4}{g}\right)\lambda-\delta\right)
-\left(q-1-\frac{q}{g}\right)\lambda.
\]
When $q=2$, we have that $\cS_{p,2}$ is the bundle of $p^{th}$-order linear
syzygies among quadrics, and we calculate that 
\begin{equation}\label{E:syzygy-quadrics}
c_1(\cS_{p,2})=\binom{g-3}{p}\left[\left(8+\frac{4}{g}-\frac{(g-1)(g-2)}{g(g-p-1)}\right)\lambda-\delta\right].
\end{equation}

Divisors of this form arise naturally in  \emph{the Hassett-Keel program} for $\Mg{g}$, whose main goal 
is to construct modular interpretations of the following log canonical models 
of $\Mg{g}$:
\begin{multline}\label{E:log-canonical-1}
\M_{g}(\alpha):=\proj \oplus_{m\geq 0} \ \HH^0\bigl(\Mg{g},  m(K_{\Mg{g}}+\alpha \delta) \bigr) \\
=\proj \oplus_{m\geq 0} \ \HH^0\bigl(\Mg{g},  m(13\lambda-(2-\alpha) \delta) \bigr),
\ \text{ where } \alpha\in [0,1]\cap\QQ.
\end{multline}
This program 
was initiated by Hassett and Hyeon who discovered that by studying GIT of $k$-canonically embedded
curves for smaller values of $k$ than in the original GIT construction of $\M_g$ by Gieseker \cite{gieseker}, 
one obtains moduli stacks of more and more singular curves. These new GIT quotients 
miraculously turn out to be isomorphic to $\M_{g}(\alpha)$ for $\alpha > 7/10 -\epsilon$ \cite{HH1,HH2}. 

While the GIT approach was successful in constructing the first two steps in the Hassett-Keel
program for $\Mg{g}$, it quickly became clear that classical Hilbert stability 
constructions do not produce the next
step in the program. On the other hand, 
heuristic computations predict that already the next step in the Hassett-Keel program, if constructed
via GIT, necessitates a stability analysis of $6^{th}$ Hilbert points of bicanonical curves
\cite{morrison-git}. 
A divisor class computation of Equation \eqref{E:syzygy-quadrics}
similarly suggests the following (perhaps overly optimistic) conjecture:
\begin{conjecture}\label{C:hk} Let $\operatorname{Hilb}(\PP^{g-1})_{p,2}^{\s}$ be the locus inside the Hilbert 
scheme of genus $g$ canonically embedded curves consisting of those curves with a semistable $(p,2)$-syzygy point.
Then 
\[
\operatorname{Hilb}(\PP^{g-1})_{p,2}^{\s}\gitq \SL(g) \simeq \proj \bigoplus_{m\geq 0} \HH^0\left(\Mg{g},  m\left(\left(8+\frac{4}{g}-\frac{(g-1)(g-2)}{g(g-p-1)}\right)\lambda-\delta\right) \right).
\]
\end{conjecture}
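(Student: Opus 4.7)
The plan is to adapt the paradigm used by Hassett--Hyeon for the first two steps of the Hassett--Keel program, replacing Hilbert stability with $(p,2)$-syzygy stability, and to read off the section ring identification from a GIT-to-moduli comparison.

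Assuming Conjecture \ref{conj-can}, set $Y := \operatorname{Hilb}(\PP^{g-1})_{p,2}^{\s} \gitq \SL(g)$; this is a projective variety carrying an ample polarization $L$ induced from the Pl\"ucker line bundle on the ambient Grassmannian. Generic smooth canonical curves have semistable $(p,2)$-syzygy points by the conjecture, so there is a rational map $\phi \colon \Mg{g} \dashrightarrow Y$, regular on the open locus $V \subset \Mg{g}$ on which the canonically embedded curve has a semistable $(p,2)$-syzygy point. By Theorem \ref{T:families}, applied to the universal canonical curve restricted to $V$, we have $\phi^{\ast} L = c_1(\cS_{p,2})|_V$, which by Equation \eqref{E:syzygy-quadrics} equals $\binom{g-3}{p} D|_V$, where $D$ is the divisor class on the right-hand side of the conjecture.

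Next, one must analyze the boundary behavior of $\phi$. This requires a stability-replacement analysis: classify, via the Hilbert--Mumford numerical criterion, all canonically embedded curves in $\PP^{g-1}$ whose $(p,2)$-syzygy points are semistable, including singular or non-reduced ones, and assemble them into an auxiliary proper moduli stack $\widetilde{\mathcal M}$ that fits into a roof $\Mg{g} \dashleftarrow \widetilde{\mathcal M} \to Y$ with both legs birational contractions. Granted such a diagram, the normality of $Y$ together with the projection formula and the codimension-two extension of sections over $\Mg{g}$ yield
\[
\bigoplus_{m\ge 0} H^0(Y, mL) \;\cong\; \bigoplus_{m\ge 0} H^0\Bigl(\Mg{g}, m\binom{g-3}{p} D\Bigr),
\]
and taking $\operatorname{Proj}$ on both sides (which is invariant under rescaling the grading) gives the claimed isomorphism.

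The main obstacle is the stability-replacement analysis itself. The Hilbert--Mumford criterion for $(p,2)$-syzygy points involves tracking weight contributions of one-parameter subgroups of $\SL(V)$ to the Schur power $\mathbb S^{\lambda_{p,2}}(V) = \wedge^{p+1} V \otimes V / \wedge^{p+2} V$, and the combinatorics becomes rapidly unwieldy for $p \ge 2$; the methods used for $p = 0$ and $p = 1$ in \cite{AFS-stability} and \cite{DFS-syzygy} do not obviously generalize. Even pinning down which reduced singular curves (cusps, tacnodes, elliptic $k$-fold points) remain semistable for a given $p$ appears open, as do the correct non-reduced replacements, such as higher-genus analogs of the balanced canonical ribbon $R_{2k+1}$. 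A secondary obstacle, already flagged by the conjecture's wording, is that the identification may require adjusting $\phi$ by further flips or small modifications, or restricting to a narrower range than $p \geq \lfloor (g-3)/2\rfloor$.
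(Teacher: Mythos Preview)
The statement you are attempting to prove is a \emph{conjecture}, not a theorem; the paper does not contain a proof of it. Immediately after stating Conjecture~\ref{C:hk}, the paper writes: ``It goes without saying that to prove this conjecture in any particular case, one would need to have a good understanding of GIT stability of syzygy points of canonical curves. Aside from some generic stability results discussed above, our knowledge here is very limited.'' The only case worked out in the paper is the instance $g=6$, $p\in\{0,1\}$, treated in Section~\ref{S:genus-6} via a direct VGIT analysis of quadric sections of degree $5$ del Pezzo surfaces; no general argument is offered.

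Your proposal is therefore not to be compared against a proof in the paper, because there is none. What you have written is a reasonable heuristic outline of how such a conjecture \emph{might} be approached, and indeed your divisor-class identification via Theorem~\ref{T:families} and Equation~\eqref{E:syzygy-quadrics} is exactly the computation the paper uses to \emph{motivate} the conjecture. But you correctly identify that the substance of any proof lies in the stability-replacement analysis, which is entirely open for $p\geq 2$ (and incomplete even for $p=1$ in even genus). Your write-up is thus an honest sketch of a program, not a proof, and you should present it as such rather than as a proof proposal.
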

It goes without saying that to prove this conjecture in any particular case, one would need to have a good 
understanding of  GIT stability of syzygy points of canonical curves.  Aside from some generic stability results
discussed above, our knowledge here is very limited. Whatever 
understanding we have, it does suffice to work out the first non-trivial variant
of Conjecture \ref{C:hk} for genus six canonical curves. This result is described in more detail in Section \ref{S:genus-6} below.

\begin{remark} A recent work of Aprodu, Bruno, and Sernesi shows that in genus
greater than $10$ and gonality greater than $3$, the $(1,2)$-syzygy point of a 
canonical curve determines the curve uniquely, unless the curve is bielliptic, in which 
case the $(1,2)$-syzygy point of the curve coincides with that of 
a cone over an elliptic curve \cite[Theorem 1]{aprodu-bruno-sernesi}. 
Thus, it is natural to expect that for 
$g\geq 11$, the GIT
quotient $\operatorname{Hilb}(\PP^{g-1})_{1,2}^{\s}\gitq \SL(g)$ will be a birational
model of $\cM_{g}$ in which the hyperelliptic, trigonal, and bielliptic loci are contracted 
(or flipped).
\end{remark}

\section{GIT for syzygies of canonical genus six curves}
\label{S:genus-6}

The first instance where the consideration of syzygy points 
leads to a genuinely new moduli space is the case of genus $6$ curves, which is
the smallest genus for which the $(1,2)$-syzygy point of a canonical curve 
is well-defined and non-trivial. What aids the GIT stability analysis here 
is the beautiful geometry of canonical genus $6$ curves, given 
by a well-known story, which we now recall. 
A smooth genus $6$ curve can be exactly one of the following: hyperelliptic, trigonal, bielliptic, a plane quintic,
or a quadric section of 
an anti-canonically embedded degree $5$ (possibly singular) del Pezzo in $\PP^5$.
A generic curve appears only in the last case, and only on a smooth del Pezzo.  Quadric sections
of singular del Pezzos form a divisor in the moduli space called the Gieseker-Petri divisor $D_{6,4}$
(we follow the taxonomy of \cite{farkas-rimanyi} for the Gieseker-Petri divisors; in particular $D_{6,4}$
is the divisor in $\mathcal{M}_6$ of curves with a base-point-free $g^{1}_4$ for which the Petri map
is not injective).
Since a smooth del Pezzo $\Sigma$ of degree $5$ is unique up to an isomorphism, and has a
group of automorphisms isomorphic to $S_5$, there 
is a distinguished birational model of $\M_6$ given by 
\begin{equation}\label{E:last-model}
X_6:=\PP \HH^0(\Sigma, -2K_\Sigma) / S_5. 
\end{equation}
This was the model used by Shepherd-Barron to prove rationality of $\M_6$ \cite{barron-6}. It has also reappeared recently in the context of the Hassett-Keel program of $\M_6$, as the ultimate non-trivial log canonical model of $\M_6$ \cite{Muller6}. In this section,
we reinterpret $X_6$ using GIT of $(0,2)$ and $(1,2)$-syzygy points
of canonical genus $6$ curves.  This allows us to also construct the penultimate
log canonical model of $\M_6$, and to realize the contraction of the Gieseker-Petri divisor $D_{6,4}$ 
as a VGIT two-ray game. 

Consider a smooth non-hyperelliptic curve $C$ of genus $6$.  
By Max Noether's theorem, the canonical embedding of $C$ is a projectively normal
degree $10$ curve in $\PP^5$. We set $V:=\HH^0(C, K_C)$, 
and identify $C$ with its canonical model in $\PP V^{\vee} \simeq \PP^5$.
According to Schreyer \cite{Schreyer1986}, 
there are exactly two possible graded Betti tables of $C$, depending
on the Clifford index of $C$.

\subsection{Clifford index $1$}
We have $\Cliff(C)=1$ if and only if $C$ has either $g^{1}_{3}$ (i.e., $C$ is trigonal) or $g^{2}_5$ (i.e., $C$ is a plane quintic). 
In this case the Betti table is:
 \[
 \begin{matrix} 
 1 &    &      & 	& \\ 
   & 6 &  8	 & 3 & \\ 
   & 3 &  8	  & 6	& \\ 
   &    &	  & 	& 1 \end{matrix}
   \]
Since $\dim K_{1,2}(C)=3$, the $(1,2)$-syzygy point of $C$ is not defined,
and so we need to analyze only Hilbert points of $C$.  
In fact, already the stability of
2nd Hilbert point detects finer aspects of the curve's geometry.
\begin{prop} Suppose $C$ is a canonically embedded smooth genus six
curve with $\Cliff(C)=1$. Then, with respect to the $\SL(V)$-action, 
the 2nd Hilbert point 
\[
\Sp_{(0,2)}(C)\in \Grass\bigl(6, \Sym^2 V\bigr)
\] 
\begin{enumerate}
\item is strictly semistable with $\dim \Stab=8$ if and only if $C$ is a plane quintic.
\item is strictly semistable with $\dim \Stab=6$ if and only if $C$ is trigonal with Maroni
invariant $0$.
\item is unstable if and only if $C$ is trigonal with positive Maroni invariant.
\end{enumerate}
\end{prop}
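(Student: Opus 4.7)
The plan is to exploit the classification of Clifford index $1$ genus six curves: such $C$ is either a plane quintic or trigonal. In the plane quintic case the canonical map factors as $C\hookrightarrow \PP^2 \hookrightarrow \PP^5$ via the $2$-Veronese, so $C$ lies on the Veronese surface $\Sigma=\nu_2(\PP^2)$. In the trigonal case, by geometric Riemann--Roch $C$ lies on a cubic rational normal scroll $\Sigma=S(a,b)\subset \PP^5$ with $a+b=4$, $a\leq b$, whose Maroni invariant is $b-a$. In either case the dimension identity $\dim I_C(2)=6=\dim I_{\Sigma}(2)$ (both sides being classical) forces $I_C(2)=I_\Sigma(2)$, so $\Sp_{(0,2)}(C)=\Sp_{(0,2)}(\Sigma)$ and the stability problem for the Hilbert point of $C$ reduces to one for the ambient surface.

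For the two semistable cases I would apply Theorem~\ref{T:kempf-luna}(a) to $\Sigma$ rather than to $C$ itself. For the Veronese, $V=\HH^0(\PP^2,\cO(2))\cong \Sym^2 U^\vee$ is the irreducible $\SL_3$-representation with $\SL_3\subset \Stab(\Sigma)\subset \SL(V)$, so $\Sp_{(0,2)}(\Sigma)=\Sp_{(0,2)}(C)$ is semistable. The stabilizer of $I_\Sigma(2)$ equals $\Aut(\Sigma,\PP^5)=\PGL_3$, of dimension $8$. For the balanced scroll $\Sigma=S(2,2)=\PP^1\times\PP^1$, $V\cong \HH^0(\cO(1,2))$ is irreducible under $\SL_2\times \SL_2$; the same argument yields semistability, and $\Aut(\Sigma,\PP^5)=\PGL_2\times \PGL_2\rtimes \ZZ/2$ has dimension $6$. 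In both cases the positive-dimensional stabilizer precludes GIT stability, so $\Sp_{(0,2)}(C)$ is strictly semistable as claimed.

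For the unstable case I would exhibit an explicit destabilizing $1$-parameter subgroup of $\SL(V)$. With coordinates $x_0,\dots,x_a,y_0,\dots,y_b$ on $\PP^5$, the ideal $I_\Sigma(2)$ is generated by the $2\times 2$ minors of the scroll matrix
\[
\begin{pmatrix} x_0 & \cdots & x_{a-1} & y_0 & \cdots & y_{b-1} \\ x_1 & \cdots & x_a & y_1 & \cdots & y_b \end{pmatrix}.
\]
Take $\lambda\colon \GG_m\to \SL(V)$ with weights $-(b+1)$ on each $x_i$ and $(a+1)$ on each $y_j$. The pure-$x$, cross, and pure-$y$ minors are $\lambda$-eigenvectors of weights $-2(b+1)$, $a-b$, and $2(a+1)$ respectively, so the total $\lambda$-weight of the Pl\"ucker coordinate of $I_C(2)\subset \Sym^2 V$ is
\[
2(a+1)\binom{b}{2}+(a-b)ab-2(b+1)\binom{a}{2}=(b-a)(a+b-1),
\]
which is strictly positive when $a<b$. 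By the Hilbert--Mumford criterion applied to the Pl\"ucker embedding, $\lambda$ then destabilizes the Hilbert point of $C$.

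Since the three cases (plane quintic, Maroni $0$ trigonal, positive Maroni trigonal) exhaust Clifford index $1$ curves of genus $6$ and produce pairwise distinct behaviors, the ``if and only if'' statements follow. The main technical obstacle I anticipate is confirming the \emph{exact} stabilizer dimensions in the semistable cases---not just lower bounds from the $G$-action used in Kempf--Luna---which reduces to the classical identifications $\Aut(\nu_2(\PP^2),\PP^5)=\PGL_3$ and $\Aut(S(2,2),\PP^5)=\PGL_2\times \PGL_2\rtimes \ZZ/2$.
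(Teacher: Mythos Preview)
Your proof is correct and follows essentially the same strategy as the paper: reduce to the ambient minimal-degree surface via $I_C(2)=I_\Sigma(2)$, apply Kempf--Luna (Theorem~\ref{T:kempf-luna}(a)) using the irreducibility of $V$ as an $\SL_3$- or $\SL_2\times\SL_2$-representation for the Veronese and balanced scroll respectively, and destabilize the unbalanced scroll by an explicit one-parameter subgroup---your weights $-(b+1),\,a+1$ are exactly the paper's $a-5,\,a+1$, and you supply the weight computation the paper leaves to \cite{fedorchuk-jensen}. One small correction: the factor-swapping involution of $\PP^1\times\PP^1$ does not preserve the class $\cO(1,2)$, so $\Aut(S(2,2),\PP^5)=\PGL_2\times\PGL_2$ without the extra $\ZZ/2$, though this of course does not affect $\dim\Stab=6$.
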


\begin{proof}
The key observation is that the quadrics containing $C$ cut out a 
surface $S$ of minimal degree in $\PP^5$ such that 
$\Sp_{(0,2)}(C)=\Sp_{(0,2)}(S)$. 
The stability analysis of $\Sp_{(0,2)}(S)$ 
is greatly simplified by the fact that $S$ is a rational 
surface with a large automorphism group. 

If $C$ is trigonal, then the canonical embedding of $C$ 
lies on a rational normal surface scroll $S_{a,4-a}$ in $\PP^5$, where $a\in \{1,2\}$
(see \cite[pp.12-13]{coppens} for a modern exposition of the classical 
work of Maroni \cite{maroni} and for a discussion of surface scrolls).
The homogeneous ideal of $S_{a,4-a}$ is 
generated by the $2 \times 2$ minors
of the following matrix
\[
 \left(\begin{array}{cccc|cccc}
x_{0} & x_{1} & \cdots & x_{a-1} & x_{a+1} & x_{a+2} & \cdots & x_{4} \\
 x_{1} & x_{2} & \cdots & x_{a} & x_{a+2} & x_{a+3} & \cdots & x_{5}
 \end{array}\right)
 \]
The Maroni invariant of $C$ is $\vert 4-2a\vert$, and equals to $0$ if and only
if the scroll is balanced. In the latter case, $S_{2,2} \simeq \PP^1\times \PP^1$,
embedded by the linear system $\vert\cO_{\PP^1\times \PP^1}(1,2)\vert$. Since
$\HH^0(\PP^1\times \PP^1, \cO(1,2))$ is an irreducible representation of $\SL(2)\times \SL(2)\subset
\Stab(S_{2,2})$,
we conclude that $\Sp_{(0,2)}(S_{2,2})=\Sp_{(0,2)}(C)$ is strictly semistable by Theorem \ref{T:kempf-luna},
and has $\dim \Stab=6$. Suppose now $C$ is trigonal with a positive Maroni invariant. Since $\Sp_{(0,2)}(C)=\Sp_{(0,2)}(S_{a,4-a})$, 
it suffices to destabilize $\Sp_{(0,2)}(S_{a,4-a})$, which is easily done using the one-parameter subgroup acting with weight
$a-5$ on $x_0, \dots, x_a$, and weight $a+1$ on $x_{a+1}, \dots, x_{5}$ (cf. \cite{fedorchuk-jensen}).

If $C$ is a plane quintic, then it lies on the Veronese surface $S=v_2(\PP^2)\subset \PP^5$. The homogeneous ideal of $v_2(\PP^2)$ is 
generated by the $2 \times 2$ minors
of the following symmetric matrix
\[
 \left(\begin{matrix}
x_{0} & x_{1} & x_2 \\ x_1 & x_3 & x_4 \\
 x_{2} & x_{4} &  x_5
 \end{matrix}\right)
 \]
 Since the Veronese is embedded into $\PP^5$ by an irreducible representation of $\SL(3)$,
we conclude that $\Sp_{(0,2)}(v_2(\PP^2))=\Sp_{(0,2)}(C)$ is strictly semistable by Theorem \ref{T:kempf-luna}, with $\dim \Stab=8$.

\end{proof}

\subsection{Clifford index $2$}
\label{S:cliff-2}
In this case the Betti table of $C$ is:
 \[
 \begin{matrix} 
 1 &    &      & 	& \\ 
   & 6 &  5	  &    & \\ 
   &    &  5	  & 6	& \\ 
   &    &	  & 	& 1 \end{matrix}
   \]
We see that the canonical embedding of $C$ satisfies $K_{1,2}(C)=0$,
and so has both a well-defined 
syzygy point $\Sp_{(0,2)}(C)\in \Grass(6, \Sym^2 V)$ 
and a well-defined 
syzygy point $\Sp_{(1,2)}(C)\in \Grass(5, \mathbb S^{(2,1)}(V))$. Given such a curve $C$, we can thus associate to it a point 
\begin{equation}\label{E:h}
h(C):=(\Sp_{(0,2)}(C), \Sp_{(1,2)}(C)) \in \mathbb H:=\Grass(6, \Sym^2 V) \times  \Grass(5, \mathbb S^{(2,1)}(V)).
\end{equation}
The $\SL(V)$-linearized semiample cone of $\mathbb H$ is spanned by the pullbacks  $p_i^*\cO(1)$ of the Pl\"ucker line bundles from the two factors. This gives rise to a two-ray VGIT game, whose endpoints correspond to GIT for $(0,2)$ and $(1,2)$-syzygy points, respectively. 

The $(1,2)$-syzygy point of $C$
coincides with the $(1,2)$-syzygy point of the unique degree $5$ surface $S$
containing $C$, called \emph{the second syzygy scheme}
of $C$; see \cite{aprodu-bruno-sernesi}. There are two different possibilities:
\begin{enumerate}
\item $S$ is a degree $5$ del Pezzo surface, namely, a blow-up of $\PP^2$
at four possibly infinitely near points, 
embedded anti-canonically into $\PP^5$. 
\item $S$ is a cone over an elliptic normal curve of degree $5$ in $\PP^4$.
\end{enumerate}
In both cases $h^0(\PP^5,\cI_S(2))=5$, and $C$ is a quadric section of $S$. 
This shows that the space of quadrics cutting out $C$ is a span of five quadrics
cutting out $S$ and another free quadric:
\[
\HH^0(\PP^5,\cI_C(2))=\HH^0(\PP^5,\cI_S(2))+\CC\langle Q\rangle \subset \Sym^2 V,
\]
and the only linear syzygies among 
the quadrics are those coming from $S$.

\subsubsection{Bielliptic curves and elliptic ribbons} 
\label{S:bielliptic}
In genus $6$, bielliptic curves arise as quadric sections of a cone
over a genus one smooth curve $E$ embedded into $\PP^4$ 
by a complete linear system $\vert \cL\vert$ of a line bundle $\cL\in \Pic^5(E)$. 
A related construction that also produces a genus $6$ curve out of the same datum is
given by ribbons of Bayer and Eisenbud \cite{BE}. Given an elliptic curve $E$ and $\cL\in \Pic^5(E)$,
we can consider an infinitesimal thickening $\cO_{R}$ of $\cO_E$ by $\cL$. More precisely, we consider 
a short exact sequence of sheaves on $E$ of the form
\[
0 \to \cI \to \cO_R \to \cO_E \to 0,
\]
where $\cO_R$ is a sheaf of $\CC$-algebras, and $\cI$ is a sheaf of principal ideals such that $\cI^2=0$
and $\cI \simeq \cL^{-1}$ as an $\cO_E$-module. We denote by $R$ the non-reduced curve with structure sheaf 
$\cO_R$ supported on $E$, and call $R$ an \emph{elliptic ribbon}. 
By construction, $R$ can be specified as an element of $\operatorname{Ext}^{1}_{\cO_E}(\cO_E, \cL^{-1})\simeq \HH^0(E, \cL)^{\vee}$. Via the natural $\GG_m$-action given by scaling the extension class, 
every genus $6$ 
elliptic ribbon on $E$ can be isotrivially degenerated to the split ribbon
\begin{equation}\label{E:split-ribbon}
R_{E}^{\cL}:=\mathit{Spec}_{\cO_E} \left(\left(\cO_E\oplus \epsilon \cL^{-1}\right)/(\epsilon^2)\right).
\end{equation}
The connection between elliptic ribbons and bielliptic curves is the following. If $E$ is embedded into $\PP^4$
by the complete linear system $\vert \cL\vert$ and $\Cone(E)$ is a cone over it in $\PP^5$, then $R_{E}^{\cL}$
is isomorphic to the double hyperplane section of $\Cone(E)$. In particular, $R_{E}^{\cL}$ is an
isotrivial specialization of every smooth
bielliptic genus $6$ quadric section of $\Cone(E)$. 

We are now ready to state the following result:

\begin{prop}\label{P:bielliptic} 
\hfill

\begin{enumerate}
\item[(a)] 
Suppose $C$ is a smooth bielliptic curve. 
Then $\Sp_{(0,2)}(C)\in \Grass(6,\Sym^2 V)$ is strictly 
semistable, $\Sp_{(1,2)}(C)\in \Grass(5,\mathbb S^{(2,1)} V)$ is unstable, and $h(C)$
is unstable with respect to any ample linearization on $\Grass(6,\Sym^2 V)\times \Grass(5, \mathbb S^{(2,1)}(V))$.
\item[(b)] 
Suppose $C$ is a genus $6$ elliptic ribbon in $\PP^5$. Then $\Sp_{(0,2)}(C)\in \Grass(6,\Sym^2 V)$ is strictly 
semistable.
\end{enumerate}
\end{prop}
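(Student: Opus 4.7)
The plan is to use a single $1$-parameter subgroup $\lambda\colon\GG_m\to\SL(V)$ with weights $(1,1,1,1,1,-5)$ on a basis $\{x_0,\ldots,x_4,y\}$ of $V$, where coordinates are chosen so that the vertex of $\Cone(E)$ is $[0{:}\cdots{:}0{:}1]$ and $E$ lies in the hyperplane $\{y=0\}$. This one $\lambda$ controls both parts.

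For (b), I would first treat the split ribbon $R_E^{\cL}$, whose $2$nd Hilbert space is $\lspan\{Q_1,\ldots,Q_5,y^2\}$ with $Q_i\in\Sym^2\lspan\{x_0,\ldots,x_4\}$ cutting out $E$. These generators are $\lambda$-semi-invariants of weights $2,2,2,2,2,-10$, so $\lambda\in\Stab(R_E^{\cL})$. Applying Theorem~\ref{T:kempf-luna}(b) with $G=\lambda(\GG_m)$ reduces $\SL(V)$-semistability to semistability under $C_{\SL(V)}(\lambda)\simeq\GL(5)$; under this centralizer, $\Sp_{(0,2)}(R_E^{\cL})$ splits equivariantly as $\lspan\{Q_1,\ldots,Q_5\}\oplus\lspan\{y^2\}$, the first factor being the $2$nd Hilbert point of the smooth elliptic normal curve $E\subset\PP^4$ (known to be $\SL(5)$-semistable) and the second a fixed line on which the central torus of $\GL(5)$ acts with a compensating character, making the Pl\"ucker class invariant. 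The conclusion is semistability; strict semistability follows since $\lambda$ is a positive-dimensional connected subgroup of $\Stab(\Sp_{(0,2)}(R_E^{\cL}))$. For a general (non-split) ribbon $C$, the $\GG_m$-scaling of the extension class is realized by $\lambda$ so that $\lim_{t\to 0}\lambda(t)\cdot C=R_E^{\cL}$; since the unstable locus is closed and $\SL$-invariant, $\Sp_{(0,2)}(C)$ is semistable, and non-stability follows because the orbits of $C$ and $R_E^{\cL}$ are distinct.

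For (a), if $C$ is smooth bielliptic and cut out by $(Q_1,\ldots,Q_5,Q)$ with $Q=ay^2+y\ell+q'$ and $a\neq 0$ (since $C$ avoids the vertex), then $\lambda(t)\cdot Q=at^{-10}y^2+t^{-4}y\ell+t^2 q'$, and after rescaling $\lim_{t\to 0}\lambda(t)\cdot C=R_E^{\cL}$. The same degeneration argument yields strict semistability of $\Sp_{(0,2)}(C)$. For $\Sp_{(1,2)}(C)$, which by Section~\ref{S:cliff-2} coincides with $\Sp_{(1,2)}(\Cone(E))$, the $5$-dimensional syzygy space $K_{2,1}(\Cone(E))$ consists of the linear syzygies inherited from $E$ and lies in $\lspan\{x_0,\ldots,x_4\}\otimes\lspan\{Q_1,\ldots,Q_5\}\subset\mathbb{S}^{(2,1)}V$, so its basis vectors all have $\lambda$-weight $+3$; the Pl\"ucker class is therefore of pure weight $+15$, giving $\mu(\Sp_{(1,2)}(C),\lambda)=-15<0$ and hence instability. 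For any ample linearization $a\,p_1^*\cO(1)+b\,p_2^*\cO(1)$ with $a,b>0$ on $\mathbb{H}$, the combined Hilbert-Mumford weight at $\lambda$ equals $a\cdot 0+b\cdot(-15)=-15b<0$, so $h(C)$ is unstable.

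The most delicate step will be verifying that $\lambda$ genuinely realizes the $\GG_m$-scaling of the extension class for non-split ribbons, requiring one to trace through the canonical embedding of a ribbon and check that its defining quadric (modulo the ideal of $\Cone(E)$) deforms as predicted. The Kempf-Luna reduction itself and the two Hilbert-Mumford weight computations are elementary once the explicit $\lambda$-weight decompositions of $\Sym^2 V$ and $\mathbb{S}^{(2,1)}V$ are in hand.
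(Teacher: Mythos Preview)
Your proposal is correct and follows essentially the same approach as the paper: the identical one-parameter subgroup with weights $(-5,1,1,1,1,1)$ on the vertex coordinate and the five coordinates of $E$, the same Luna reduction to the centralizer (your ``compensating character'' observation is precisely the paper's trick of replacing a one-parameter subgroup $\mu$ of the centralizer by $\rho^{-w_0}\mu^{5}$ to land in $\SL(5)$), the same appeal to Kempf's result for the elliptic normal quintic $E\subset\PP^4$, and the same degeneration of both bielliptic curves and non-split ribbons to the split ribbon $R_E^{\cL}$. Your closing worry about whether $\lambda$ realizes the $\GG_m$-scaling of the ribbon extension class is unnecessary for the semistability claim (openness of semistability suffices once the split ribbon is handled), though it is what underlies non-stability of non-split ribbons, and it does hold.
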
 
\begin{proof} Suppose $C$ is a smooth bielliptic curve. As we have discussed,
$C$ is a quadric section of a cone $S=\Cone(E)$ over a normal elliptic curve $E\subset \PP^4$,
where $E$ is embedded by a complete linear system $\vert \cL\vert$ of degree $5$.
Choose a basis $x_0, x_1,\dots, x_5$ of $\HH^0(C,K_C)^{\vee}$ such that the vertex of
the cone $S$ is given by $x_1=\cdots=x_5=0$. Let
$\rho$ be the one-parameter subgroup of $\SL(V)$ acting with weight $-5$ on $x_0$
and weight $1$ on each of $x_1, \dots, x_5$.  Then all five quadrics in $\HH^0(\PP^5,\cI_S(2))$ are homogeneous of weight $2$, while the smallest weight
term of the free quadric $Q$ has weight $-10$. At the same time, 
all six syzygies in $\Sp_{(1,2)}(C)=\Sp_{(1,2)}(S)$ are homogeneous of weight $3$. 
This proves all instability claims.  

It remains to show that $\Sp_{(0,2)}(C)$ is actually semistable. 
Note that \[
\lim_{t\to 0} \rho(t)\cdot \Sp_{(0,2)}(C)=\Sp_{(0,2)}(R),
\]
where $R=R_{E}^{\cL}$ is the canonically embedded 
genus $6$ split ribbon supported on $E$ and given by Equation \eqref{E:split-ribbon}.
Thus it suffices to prove that $R$ is semistable.

Explicitly, $I_R=(I_S, x_0^2)$ and the one-parameter subgroup $\rho$ introduced above is a subgroup of $\Stab(R)$.
By Theorem \ref{T:kempf-luna},
it suffices to verify semistability of $\Sp_{(0,2)}(R)$ with respect to the 
centralizer of $\rho$ in $\SL(6)$. Let $\mu$ be a 
one-parameter subgroup of this centralizer. Suppose $\mu$ acts on $x_0$
by $x_0 \mapsto t^{w_0}x_0$. Since $\Sp_{(0,2)}(R)$ is strictly semistable
with respect to $\rho$, it will be $\mu$-semistable if and only if
it is semistable with respect to the one-parameter subgroup $\mu':=\rho^{-w_0} \mu^{5}$.
Since $\mu'$ acts trivially on $x_0$ and acts via a one-parameter subgroup of $\SL(5)$ on $\CC\langle x_1,\dots, x_5\rangle$, 
we have reduced to verifying $\mu'$-semistability of the 2nd Hilbert point 
of $E\subset \PP^4$.   However, $E$ is an abelian variety, embedded by a complete linear
series, and so has a semistable 2nd Hilbert point by \cite[Corollary 5.2]{kempf}.  

Having established semistability of all canonical split elliptic ribbons of genus $6$, 
we immediately obtain Part (b), as every elliptic ribbon isotrivially
degenerates to a split ribbon.
\end{proof}

\subsubsection{Singular degree $5$ del Pezzo}
\label{S:singular-del-pezzo} 
Consider now a degree $5$ del Pezzo surface $\Sigma_0$ with exactly one ordinary 
double point and no other singularities, which is constructed as follows. 
Let $x, y, z$ be the standard coordinates on $\PP^2$, and let $X$ be the blow-up of 
$\PP^2$ at the points $p_1:=[1:0:0]$, $p_2:=[1:1:0]$, $p_3:=[0:1:0]$ and
$p_4:=[0:0:1]$. The first three of these points lie on the line $z=0$, and
so the strict transform of this line in $X$ is a $(-2)$-curve. The anti-canonical
line bundle of $X$ is globally generated, and defines a morphism to $\PP^5$
whose image is  $\Sigma_0$, with the $A_1$-singularity of $\Sigma_0$ being precisely 
the contraction of the  $(-2)$-curve on $X$. 

The automorphism group of $\Sigma_0$ 
will play an important role in our GIT stability analysis, so we note first 
that $\Sigma_0$ admits a $\GG_m$-action, induced
by the scaling action on $\PP^2$ that fixes $p_4$ and the line $z=0$. We
then note that there is also an action of the symmetric group $S_3$, induced by 
the action of $S_3$ on $\PP^2$ permuting
the three points $p_1, p_2, p_3$, and leaving $p_4$ fixed.   

\begin{prop}\label{P:singular-del-pezzo} Let $C$ be a quadric section 
of $\Sigma_0$ that does not pass through the singular point of $\Sigma_0$. 
Consider an ample linearization of the $\SL(V)$-action on 
$\mathbb H$ given by $\cO(1) \boxtimes \cO(\beta)$.
Then $h(C)=(\Sp_{(0,2)}(C), \Sp_{(1,2)}(C))$ is: 
\begin{enumerate}
\item unstable if $\beta>4$.
\item strictly semistable if $\beta=4$. 
\end{enumerate}
\end{prop}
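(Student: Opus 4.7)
The plan is to detect the wall at $\beta=4$ using a single $1$-PS $\rho\subset \SL(V)$ coming from the $\GG_m$-action on $\Sigma_0$. I would lift the scaling of $z$ on $\PP^2$ (which fixes $p_4$ and the line $\{z=0\}$) to an action on $V=\HH^0(\Sigma_0,-K_{\Sigma_0})$; in the basis of anti-canonical cubics $x_0=xy(x-y)$, $x_1=x^2z$, $x_2=xyz$, $x_3=y^2z$, $x_4=xz^2$, $x_5=yz^2$, this action has unnormalized weights $(0,1,1,1,2,2)$, and after normalizing to $\SL(V)$ the integer weights $(-7,-1,-1,-1,5,5)$. The $\rho$-fixed point $[1{:}0{:}\cdots{:}0]$ is the singular point of $\Sigma_0$.

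Next I would compute the $\rho$-weight on each factor of $h(C)$. A generating set of $\cI_{\Sigma_0}$ is furnished by the three $2\times 2$ minors $q_1=x_1x_3-x_2^2$, $q_2=x_1x_5-x_2x_4$, $q_3=x_2x_5-x_3x_4$ of the catalecticant $\bigl(\begin{smallmatrix} x_1 & x_2 & x_4\\ x_2 & x_3 & x_5\end{smallmatrix}\bigr)$, together with $q_4=x_0x_4-x_1x_2+x_1x_3$ and $q_5=x_0x_5-x_1x_3+x_2x_3$; these are $\rho$-eigenvectors of weights $(-2,4,4,-2,-2)$. Because $C$ misses the singular point of $\Sigma_0$, the sixth free quadric $Q$ in $(I_C)_2$ must have a non-zero $x_0^2$-coefficient of $\rho$-weight $-14$, so the Pl\"ucker weight of $\Sp_{(0,2)}(C)$ works out to $-14+(-2)+4+4+(-2)+(-2)=-12$. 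For the second factor $\Sp_{(1,2)}(C)=\Sp_{(1,2)}(\Sigma_0)$ I would produce a basis of five linear syzygies — the two Hilbert--Burch syzygies of the catalecticant matrix (each of $\rho$-weight $3$) together with three more syzygies extracted from the identities $x_5q_4-x_4q_5=(x_3-x_2)q_2+x_4q_1$ (weight $3$), $x_2q_4-x_1q_5=x_1q_1-x_0q_2$ (weight $-3$), and $x_3q_4-x_2q_5=x_3q_1-x_0q_3$ (weight $-3$) — giving a Pl\"ucker weight of $3+3+3-3-3=3$. Consequently the $\rho$-weight of $h(C)$ under the linearization $\cO(1)\boxtimes \cO(\beta)$ equals $-12+3\beta$, which is strictly positive for $\beta>4$. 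This proves (1) by the Hilbert--Mumford criterion; for $\beta=4$ the weight is $0$, which rules out proper stability and places us on the wall.

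The harder, and decisive, step is to establish actual semistability of $h(C)$ at $\beta=4$, for which my plan is to pass to the isotrivial limit
\[
h_0:=\lim_{t\to 0}\rho(t)\cdot h(C)=\bigl(\lspan(x_0^2,q_1,q_2,q_3,q_4,q_5),\ \Sp_{(1,2)}(\Sigma_0)\bigr).
\]
Since $h_0\in\overline{\SL(V)\cdot h(C)}$, it suffices to show $h_0$ is semistable. The stabilizer of $h_0$ in $\SL(V)$ contains the reductive subgroup $\rho(\GG_m)\cdot S_3$, where $S_3\subset \Aut(\Sigma_0)$ permutes the three lines $\{x=0\}$, $\{y=0\}$, $\{x=y\}$ comprising $\{x_0=0\}\cap \Sigma_0$. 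Applying Theorem \ref{T:kempf-luna}(b), the $\SL(V)$-semistability of $h_0$ reduces to its semistability under the centralizer of $\langle\rho,S_3\rangle$ in $\SL(V)$, which is a three-dimensional torus $T$ acting on the $S_3$-isotypic decomposition $V=\CC\langle x_0\rangle\oplus V_{-1}^{\mathrm{triv}}\oplus V_{-1}^{\mathrm{std}}\oplus V_5^{\mathrm{std}}$. The main obstacle is then verifying that the origin lies in the convex hull of the finitely many $T$-weights of the Pl\"ucker coordinate of $h_0$ with respect to $\cO(1)\boxtimes \cO(4)$: this is an explicit but delicate polytope computation requiring simultaneous bookkeeping of weights across both Grassmannian factors and balance along every extremal ray of $T$.
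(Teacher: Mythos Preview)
Your proposal follows the paper's proof essentially step for step: the same destabilizing $1$-PS $\rho$ coming from the $\GG_m$-action on $\Sigma_0$, the same weight computation on both Grassmannian factors (your subspace Pl\"ucker convention gives signs opposite to the paper's quotient convention, but consistently so), the same passage to the $\rho$-fixed limit $h_0=h(C_0)$, and the same Kempf--Luna reduction to the centralizer of $\GG_m\times S_3$. Your identification of that centralizer as a $3$-torus is in fact sharper than the paper's claimed $2$-torus --- the paper overlooks that $\langle x_1,x_2,x_3\rangle\cong\Sym^2(\mathrm{std})$ splits as $\mathrm{triv}\oplus\mathrm{std}$ under $S_3$ --- so your final polytope check is the honest version of what the paper calls a ``routine calculation'' after further reducing modulo $\rho$.
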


\begin{proof}
The natural scaling action of $\GG_m$ on $\PP^2$, given by 
$t\cdot [x:y:z]=[t^{-1}x:t^{-1}y:z]$, extends to $X$, and 
gives rise to a one-parameter subgroup $\rho$ of $\Stab(\Sigma_0) \subset \SL(V)$. To understand
this subgroup, 
we begin by choosing a $\GG_m$-semi-invariant basis 
of $\HH^0(X, -K_X)$: 
\begin{equation}\label{E:S_0-basis}
a:=xy(x-y),  b_1:=zx^2, b_2:=zxy, b_3:=zy^2, c_1:=z^2x, c_2:=z^2y.
\end{equation}
Re-normalizing the weights of the action so as to obtain a one-parameter subgroup of $\SL(6)$, we see that $\rho$ acts via
\[
t\cdot (a, b_1, b_2, b_3, c_1, c_2)=(t^{-7} a, t^{-1}b_1, t^{-1}b_2, t^{-1}b_3,  t^{5}c_1, t^{5}c_2).
\]
Recall that $S_3$ acts on $\Sigma_0$ and 
note that $U_1:=\langle c_1,c_2\rangle$ is the standard representation 
of $S_3$, while $U_2:=\langle b_1,b_2, b_3\rangle$ is its second symmetric
power. It follows that $\HH^0(\Sigma_0, \cO_{\Sigma_0}(1))\simeq \HH^0(X, -K_X)$ is a multiplicity-free representation
of $\Aut(\Sigma_0)$. 

The quadrics cutting out $\Sigma_0$ are given by the Pfaffians of the following anti-symmetric matrix:
\begin{equation}\label{E:pfaffian-sing}
\left(\begin{matrix} 
0 & z_1 & z_1 & z_2 & -z_0 \\
-z_1 & 0 & z_4 & 0 & z_2 \\
-z_1 & -z_4 & 0 & z_5 & z_3 \\
-z_2 & 0 & -z_5 & 0 & z_3 \\
z_0 & -z_2 & -z_3 & -z_3 & 0 
\end{matrix}\right)
\end{equation}
A routine computation now shows that $\rho$ acts on $\det \HH^0\bigl(\PP^{5}, \cI_{\Sigma_0}(2)\bigr)$
with weight $2$, and on $\det \HH^0\bigl(\PP^{5}, \cI_{\Sigma_0}(3)\bigr)$ with weight $9$. 
We also record that $\HH^0(\Sigma_0,\cO_{\Sigma_0}(2))$ is a homomorphic image of
\begin{equation}\label{E:singular-quadrics}
\langle a^2\rangle \oplus a U_2 \oplus \Sym^2 U_1 \oplus
\langle b_1^2, b_1b_2, b_2^2, b_2b_3, b_3^2\rangle \oplus
\langle b_1c_1,b_2c_1, b_2c_2, b_3c_2\rangle
\end{equation}
which also shows that $\rho$ acts on $\HH^0(\Sigma_0,\cO_{\Sigma_0}(2))$
with weights \[(-14, -8, -8, -8, 10, 10, 10, -2, -2, -2, -2, -2, 4, 4, 4, 4),\] and on
$\det \HH^0(\Sigma_0,\cO_{\Sigma_0}(2))$ with weight 
$-2$. From this, we conclude that the $\rho$-weight of 
$\Sp_{(1,2)}(\Sigma_0)$ is $-3$, 
which shows in particular that $\Sp_{(1,2)}(\Sigma_0)$ is unstable.

Suppose now $C\subset \Sigma_0$ is a quadric section of $\Sigma_0$, given 
by an equation $Q(z_0,\dots,z_6)=0$. 
Since $\HH^0(C,\cI_C(2))=\HH^0(\Sigma_0,\cI_{\Sigma_0}(2))\bigoplus \CC\langle Q\rangle$,
we see that $\rho$ acts on $\Sp_{(0,2)}(C)$ with total 
weight 
\[
w_{\rho}(\Sp_{(0,2)}(C))= -w_\rho(Q)+w_{\rho}(\Sp_{(0,2)}(\Sigma_0)) \leq 14-2=12,
\]
with equality holding if and only if the lowest $\rho$-weight term of $Q$ is $z_0^2$ if and only 
if $C$ does not pass through the singularity of $\Sigma_0$. 

Continue with the assumption that $C$ does not pass through the singularity of $\Sigma_0$. Set
\[
C_0:=\lim_{t\to 0} \rho(t)\cdot C,
\]
where we are taking a flat limit in $\PP^5$. Then it is easy to see that 
\begin{multline}\label{E:gp-curve}
I_{C_0}=(z_0^2, I_{\Sigma_0})\\ =(z_0^2, 
z_3z_4-z_2z_5,
z_2z_4-z_1z_5,
z_2^2-z_1z_3,
z_1z_3-z_2z_3-z_0z_5,
z_1z_2-z_0z_4+z_1z_3).
\end{multline}
The curve $C_0$ will play a prominent role in what follows. We note some of its properties.
To begin, $C_0$ is a canonically embedded Gorenstein curve. Scheme-theoretically, it is a union of 
a fourfold line $L_0=\VV(z_4-z_5,z_2-z_3,z_1-z_3,z_0)$ and three double lines 
$L_1=\VV(z_4,z_2,z_1,z_0)$, $L_2=\VV(z_3,z_2,z_1,z_0)$, and $L_3=\VV(z_5,z_3,z_2,z_0)$.
By construction, $C_0$ is also the unique $\GG_m$-invariant double hyperplane section of $\Sigma_0$
that does not pass through the singular point of $\Sigma_0$. 
The statement of the proposition now follows from the following lemma:

\begin{lemma}\label{L:C0} Consider the linearization 
$\cO(1)\boxtimes \cO(\beta)$ on $\mathbb H$. Then $C_0$ is polystable for $\beta=4$.
Furthermore, $C_0$ is destablized by $\rho$ when
$\beta>4$, and is destabilized by $\rho^{-1}$ when $\beta<4$. 
\end{lemma}
\begin{proof}
By the above computation, we have 
\begin{align*}
w_{\rho}(\Sp_{(0,2)}(C_0))&= w_\rho(z_0^2)+w_{\rho}(\Sp_{(0,2)}(\Sigma_0)) = 14-2=12, \\
w_{\rho}(\Sp_{(1,2)}(C_0)) &= w_{\rho}(\Sp_{(1,2)}(\Sigma_0)) = -3. 
\end{align*}
The instability claims follow. 

It remains to show that $C_0$ is polystable for $\beta=4$. For this we note that
$C_0$ is fixed by $\Aut(\Sigma_0)$, and so in particular $\Aut(C_0)=\Aut(\Sigma_0)\simeq \GG_m\times S_3$.
Hence, by Theorem \ref{T:kempf-luna}, it suffices to verify polystability of $C_0$
with respect to those one-parameter subgroups of $\SL(V)$ that commute with $\GG_m\times S_3 \subset \Aut(C_0)$.
Any such one-parameter subgroup acts diagonally on the distinguished basis
of $V$ given by Equation \eqref{E:S_0-basis} as follows 
\[
t\cdot (z_0,\dots, z_5)=(t^{w_0} z_0, t^{w_1} z_1, t^{w_1} z_2, t^{w_1} z_3, t^{w_2} z_4, t^{w_2} z_5),
\]
where $w_0+3{w_1}+2{w_2}=0$. Since the $\rho$-action stabilizes $C_0$, it suffices to check
the polystability of $C_0$ only with respect to the one-parameter subgroup given by weights $(-2, 0, 0, 0, 1, 1)$
and $(2, 0, 0, 0, -1, -1)$.
This is a routine calculation using the fact that the quadrics and syzygies of $\Sigma_0$
are those satisfied by the Pfaffians of the matrix \eqref{E:pfaffian-sing}.
\end{proof}
\noqed
\end{proof}

\subsection{The Hassett-Keel program for $\M_6$}

In this subsection, we bootstrap the stability results of \S\ref{S:cliff-2} in order
to show that the last two steps in the Hassett-Keel program for $\M_6$ are constructed
via GIT for syzygy points of canonically embedded curves. 
 
Let $V=\CC^6$.
Recall that $\mathbb H=\Grass(6, \Sym^2 V) \times  \Grass(5, \mathbb S^{(2,1)}(V))$
is equipped with a two-dimensional $\SL(V)$-ample cone, whose elements we denote by $\cO(1) \boxtimes \cO(\beta)$.
For every smooth canonically embedded curve $C\subset \PP V^{\vee}$ of genus six and Clifford
index $2$, we have a well-defined point $h(C)=(\Sp_{(0,2)}(C), \Sp_{(1,2)}(C))\in \mathbb H$.
Denote by $\cQ \subset \mathbb H$ the Zariski closure of the locus of all such $h(C)$ in $\mathbb H$.

We have a natural $\SL(V)$-action on $\cQ$, and we denote by $\cQ^{\s}(\beta)$ the semistable locus in $\cQ$
with respect to the linearization $\cO(1) \boxtimes \cO(\beta)$. We also let 
$\cG(\beta):=[\cQ^{\s}(\beta)/\SL(V)]$ be the corresponding GIT quotient stack and denote by
$\overline G(\beta)$ its moduli space. Namely, we have that 
\[
\overline G(\beta)=\cQ^{\s}(\beta)\gitq \SL(V).
\]

We recall that $\Sigma$ is a smooth degree $5$ del Pezzo and $\Sigma_0$ is a degree $5$ del
Pezzo with a unique $A_1$-singularity. Let $\cU\subset \PP\HH^0(\Sigma_0, -2K_{\Sigma_0})$ be
the open locus of quadric sections of $\Sigma_0$ not passing through the singularity. 
We recall that $C_0$, given by Equation \eqref{E:gp-curve}, is the unique curve with $\GG_m$-action 
in $\cU$. We note that, on a smooth del Pezzo, there exists a unique $C'_0 \in \PP\HH^0(\Sigma, -2K_{\Sigma})$
such that $C'_0$ isotrivially specializes to $C_0$. Explicitly, we have
\[
C'_0=4F_1+2F_2+2F_3+2F_4,
\] 
where $F_i$'s are $(-1)$-curves on $\Sigma$ such that
$F_2, F_3, F_4$ meet $F_1$, and have no other pairwise intersections; $C'_0$ is the same curve
as described in \cite[Proposition 2.6]{Muller6}.

We will also need the following counterpart of Proposition \ref{P:singular-del-pezzo}:
\begin{lemma}\label{L:smooth-del-pezzo}
For every $C\in \PP\HH^0(\Sigma, -2K_{\Sigma})$, the syzygy point $\Sp_{(1,2)}(C)$ 
is semistable. 
\end{lemma}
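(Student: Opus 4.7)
The plan is to identify the syzygy point $\Sp_{(1,2)}(C)$ with the fixed point $\Sp_{(1,2)}(\Sigma)$ on the Grassmannian and then apply the Kempf-Luna criterion, Theorem~\ref{T:kempf-luna}(a), using the large automorphism group of the smooth quintic del Pezzo $\Sigma$.

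First I would note, as already explained in \S\ref{S:cliff-2}, that every $C\in \PP\HH^0(\Sigma, -2K_{\Sigma})$ is a quadric section of $\Sigma$, so $I_{C}=I_{\Sigma}+(Q)$ for a free quadric $Q$. The five Pfaffian quadrics cutting out $\Sigma$ carry a $5$-dimensional space of linear syzygies, and since $\dim K_{2,1}(C)=5$ whenever the $(1,2)$-syzygy point is well-defined, the natural inclusion $K_{2,1}(\Sigma)\hookrightarrow K_{2,1}(C)$ is forced to be an equality. As a quotient of $\mathbb S^{(2,1)}(V)$, the point $\Sp_{(1,2)}(C)$ therefore coincides with $\Sp_{(1,2)}(\Sigma)$ independently of $C$, i.e.\ the map $C\mapsto \Sp_{(1,2)}(C)$ is constant on the open locus in $\PP\HH^0(\Sigma, -2K_{\Sigma})$ where it is defined.

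Next I would identify $V=\HH^0(\Sigma,-K_{\Sigma})$ as a representation of $\Aut(\Sigma)\simeq S_{5}$. Via the classical realisation of $\Sigma$ as a linear section $\Sigma \hookrightarrow \Gr(2,5) \subset \PP(\wedge^{2}\CC^{5})$, in which $S_{5}$ acts on $\CC^{5}$ by permutation, the space $V$ is recognised as $\wedge^{2}$ of the four-dimensional standard $S_{5}$-representation; this is the unique irreducible six-dimensional representation of $S_{5}$, indexed by the partition $(3,1,1)$. In particular, $V$ is an irreducible representation of the reductive (indeed finite) subgroup $G=S_{5}\subset \Stab(\Sigma)\subset \SL(V)$. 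Theorem~\ref{T:kempf-luna}(a) then immediately yields that every well-defined syzygy point of $\Sigma$ is $\SL(V)$-semistable, and in particular $\Sp_{(1,2)}(\Sigma)=\Sp_{(1,2)}(C)$ is semistable.

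There is no serious obstacle here: the argument is entirely formal once the two ingredients are on the table, namely the identification of $V$ as the irreducible $6$-dimensional $S_5$-representation and the equality $\Sp_{(1,2)}(C)=\Sp_{(1,2)}(\Sigma)$ coming from the second-syzygy-scheme picture of \cite{aprodu-bruno-sernesi}. The large automorphism group of the smooth del Pezzo essentially does all of the GIT work for us, in sharp contrast to the more delicate analysis for the singular del Pezzo $\Sigma_{0}$ in Proposition~\ref{P:singular-del-pezzo}, where $\Aut(\Sigma_{0})$ is much smaller and where, moreover, one cannot ignore the free quadric $Q$.
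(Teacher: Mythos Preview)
Your argument is correct and follows the same route as the paper: identify $\Sp_{(1,2)}(C)=\Sp_{(1,2)}(\Sigma)$ via the second syzygy scheme, then apply Theorem~\ref{T:kempf-luna}(a) using that $V=\HH^0(\Sigma,-K_\Sigma)$ is an irreducible $S_5$-representation (the paper simply cites \cite{barron-6} for this, whereas you spell out the $\Gr(2,5)$ picture). One small remark: your hedge ``on the open locus where it is defined'' is unnecessary, since any quadric section of the arithmetically Cohen--Macaulay surface $\Sigma$ inherits its Betti table and hence has $K_{1,2}(C)=0$, so $\Sp_{(1,2)}(C)$ is well-defined for \emph{every} $C\in \PP\HH^0(\Sigma,-2K_\Sigma)$, as the lemma asserts.
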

\begin{proof} The second syzygy scheme of $C$ is $\Sigma$, and so $\Sp_{(1,2)}(C)=\Sp_{(1,2)}(\Sigma)$.
Since $\Sigma$ is embedded into $\PP^5$ by an irreducible representation of $\Aut(\Sigma)=S_5$ by
\cite{barron-6}, the semistability of $\Sp_{(1,2)}(\Sigma)$ follows from Theorem \ref{T:kempf-luna}.
\end{proof}
Having fixed all the notation, we are ready to state the main result of this section:

\begin{theorem}[Contraction of the Gieseker-Petri divisor in $\M_6$ via VGIT]
\begin{enumerate}
\item[]
\item For $\beta>4$, $\cQ^{\s}(\beta)=\{h(C) \mid C\in \PP\HH^0(\Sigma, -2K_{\Sigma})\}$.
Moreover, 
\[
\cG(\beta)\simeq [\PP \HH^0(\Sigma, -2K_{\Sigma})/S_5]
\]
is a Deligne-Mumford stack and $\overline G(\beta) \simeq X_6 \simeq \M_6(\alpha)$, where $\alpha\in (16/47, 35/102)$.

\item For $\beta=4$, $\cQ^{\s}(4)=\cQ^{\s}(4+\epsilon) \cup \{h(C) \mid C\in \cU\}$. Moreover, 
\[
\overline G(4)\simeq \M_6(35/102).
\]
\item For $\beta\in (4-\epsilon, 4)$, 
\[
\cQ^{\s}(\beta)=\{h(C) \mid C\in \cU, \ C\neq C_0\}\cup 
\{h(C) \mid C\in \PP\HH^0(\Sigma, -2K_{\Sigma}), \ C\neq C'_0\}.\]
The stack $\cG(\beta)$ is Deligne-Mumford, and we have 
\[
\overline G(\beta)\simeq \M_6\left(\frac{35}{102}+\epsilon\right).
\]
\item We have a commutative diagram
\begin{equation}\label{E:VGIT}
\begin{aligned}
\xymatrix{
\cG(4-\epsilon) \ar@{^(->}[r] \ar[d]	
& \cG(4) \ar[dd] & \cG(4+\epsilon) \  \ar[dd]  \ar@{_(->}[l]  \\
\M_{6}(\frac{35}{102}+\epsilon)\ar[rd]
& 			
& 	\\
&	   \M_{6}(\frac{35}{102})     & \M_{6}(\frac{35}{102}-\epsilon) \ar[l]_{\sim} \\
}
\end{aligned}
\end{equation}
\end{enumerate}

Moreover, 
$\M_6(\frac{35}{102}+\epsilon)$ is isomorphic to $\M_6$ at the generic point of the Gieseker-Petri
divisor $D_{6,4}$ and $\M_{6}(\frac{35}{102}+\epsilon) \to \M_{6}(\frac{35}{102})$ is the contraction 
of this divisor to $[C_0]\in \M_{6}(\frac{35}{102})$. 

\end{theorem}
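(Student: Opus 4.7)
The plan is to combine the stability computations of \S\ref{S:cliff-2} with a VGIT wall-crossing across $\beta=4$, and then match each chamber with a log canonical model of $\M_6$ via the divisor class formulae of Section \ref{S:families}. The first step is to determine $\cQ^{\s}(\beta)$ in each of the three ranges. Proposition \ref{P:bielliptic}(a) excludes bielliptic smooth curves (and their ribbon degenerations) from $\cQ^{\s}(\beta)$ for every $\beta>0$. Lemma \ref{L:smooth-del-pezzo}, combined with openness of semistability around the polystable $\Aut(\Sigma)\simeq S_5$-orbit, yields $h(C)\in \cQ^{\s}(\beta)$ for every smooth quadric section $C$ of $\Sigma$ and every $\beta>0$, with the sole exception of the isotrivial partner $C'_0$ once $\beta<4$. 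On the singular side, Proposition \ref{P:singular-del-pezzo} gives instability of every $h(C)$ with $C\in \cU$ for $\beta>4$, while Lemma \ref{L:C0} identifies $\beta=4$ as the unique wall: there $h(C_0)$ is polystable, every $C\in \cU$ is S-equivalent to $C_0$ via the one-parameter subgroup $\rho$, and $C'_0$ is carried into the closure of the $\SL(V)$-orbit of $C_0$. For $\beta$ just below $4$ the destabilizing direction at $h(C_0)$ reverses sign, so $C_0$ is destabilized by $\rho^{-1}$ and $C'_0$ drops out of the semistable locus, while every other $C\in \cU$ stays semistable by openness; this matches the description in part (3).

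Next I would extract the stacky identifications and the log-canonical matchings. For $\beta>4$ the second syzygy scheme of any $C\in \cQ^{\s}(\beta)$ is forced to be a smooth degree-$5$ del Pezzo, unique up to isomorphism with $\Aut(\Sigma)\simeq S_5$, so the assignment $C\mapsto h(C)$ realizes $\cG(\beta)\simeq [\PP \HH^0(\Sigma,-2K_{\Sigma})/S_5]$ and $\overline G(\beta)\simeq X_6$ by Shepherd-Barron. Parallel arguments, using the open stratum $\cU$ and the polystable orbit of $C_0$, handle $\beta=4$ and $\beta\in(4-\epsilon,4)$, and Deligne-Mumford-ness off the strictly semistable orbit is automatic from the finiteness of $\Aut(C)$ for generic $C$. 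To identify these quotients with Hassett-Keel models I would apply Theorem \ref{T:families}: the VGIT polarization $\cO(1)\boxtimes \cO(\beta)$ descends to a positive multiple of $c_1(\cS_{0,2})+\beta\,c_1(\cS_{1,2})$ on $\Mg{6}$. Substituting $g=6$ into Equation \eqref{E:syzygy-quadrics} gives $c_1(\cS_{0,2})=8\lambda-\delta$ and $c_1(\cS_{1,2})=\frac{47}{2}\lambda-3\delta$, and the sum is proportional to $13\lambda-(2-\alpha)\delta$ exactly when
\[
\alpha=\frac{6+16\beta}{16+47\beta},
\]
a decreasing function with $\alpha(4)=35/102$ and $\lim_{\beta\to\infty}\alpha=16/47$. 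This reproduces the intervals of parts (1)--(3) and gives the isomorphisms $\overline G(\beta)\simeq \M_6\bigl(\alpha(\beta)\bigr)$, completing Diagram \eqref{E:VGIT}.

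The \emph{Moreover} clause follows from the wall-crossing together with the classical description of the Gieseker-Petri divisor. A generic point of $D_{6,4}$ is either a smooth bielliptic curve or a smooth quadric section of a degree-$5$ del Pezzo with a single $A_1$-singularity; the first is excluded from $\cQ^{\s}(\beta)$ by Proposition \ref{P:bielliptic}(a), while the second is precisely a point of $\cU$. By Lemma \ref{L:C0}, every such $C$ is S-equivalent to $C_0$ at $\beta=4$, so the morphism $\overline G(4-\epsilon)\to \overline G(4)$ contracts the proper transform of $D_{6,4}$ to $[C_0]\in \overline G(4)$, while outside $D_{6,4}$ it is an isomorphism by part (1) and the Schreyer classification recalled in \S\ref{S:cliff-2}. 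The principal obstacle I foresee is verifying semistability of \emph{individual}, not merely $\Aut$-invariant, quadric sections on each side of the wall: Theorem \ref{T:kempf-luna} together with the decomposition \eqref{E:singular-quadrics} handles the distinguished orbits of $C_0$ and $C'_0$, but for a generic $C\in \cU$ or $C\in \PP \HH^0(\Sigma,-2K_{\Sigma})$ one must reduce any hypothetical destabilizing one-parameter subgroup at $\beta=4\mp\epsilon$ to a perturbation of $\rho^{\pm 1}$, to be controlled using the $\GG_m\times S_3$-representation structure of the quadrics through $\Sigma_0$ and the $S_5$-structure of those through $\Sigma$.
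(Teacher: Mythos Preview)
Your overall architecture is right, and the divisor-class computation matching $\cO(1)\boxtimes\cO(\beta)$ to $\alpha=(6+16\beta)/(16+47\beta)$ is correct. But there is a genuine gap exactly where you flag it, and your proposed fix is not the one the paper uses.

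The problem is in part (3). Your claim that ``every other $C\in \cU$ stays semistable by openness'' for $\beta$ just below $4$ is not valid: at $\beta=4$ every $C\in \cU$ is only \emph{strictly} semistable (each $\rho$-degenerates to $C_0$), and strictly semistable points at a VGIT wall can and do become unstable on one side. Openness of semistability in the linearization parameter applies to stable points, not to points whose orbit closure meets a polystable point with positive-dimensional stabilizer. So one needs an independent reason for semistability of some $C\in\cU\setminus\{C_0\}$ when $\beta<4$.

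The paper's device is an \emph{interpolation} rather than a perturbation of $\rho$. One exhibits a specific curve in $\cU$ that is semistable at \emph{both} endpoints $\beta=4$ and $\beta=0$, namely an elliptic ribbon $R$ on $\Sigma_0$ obtained as the strict transform of a doubled smooth plane cubic through $p_1,\dots,p_4$. By Proposition~\ref{P:singular-del-pezzo}, $h(R)$ is semistable at $\beta=4$; by Proposition~\ref{P:bielliptic}(b), $\Sp_{(0,2)}(R)$ is semistable, i.e.\ $h(R)$ is semistable at $\beta=0$. Convexity of the $\beta$-interval of semistability then gives semistability of $h(R)$ for all $\beta\in(0,4)$. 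Now one combines two facts: (i) for $\beta\in(4-\epsilon,4)$ no semistable point can have a $\GG_m$ in its stabilizer, since $\Sigma$ has none and the only $\GG_m$-invariant curve in $\cU$ is $C_0$ (unstable for $\beta<4$); hence $\cG(\beta)$ is Deligne-Mumford; and (ii) properness of the GIT quotient forces the a priori open inclusion $\cU\cap\cQ^{\s}(\beta)\subset \cU\setminus\{C_0\}$ to be an equality, by comparing with the explicit projective quotient $(\cU\setminus\{C_0\})\gitq\Aut(\Sigma_0)$. This is how one gets \emph{every} $C\in\cU\setminus\{C_0\}$ semistable, not just generic ones.

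The same properness-after-nonemptiness pattern is what actually proves part (1): Lemma~\ref{L:smooth-del-pezzo} together with generic semistability of $\Sp_{(0,2)}$ gives only that $\cQ^{\s}(\beta)$ is nonempty for $\beta>4$; the equality $\cQ^{\s}(\beta)=\{h(C):C\in\PP\HH^0(\Sigma,-2K_\Sigma)\}$ then comes from the absence of infinitesimal automorphisms on $\Sigma$ (so semistable $=$ stable) plus properness of the resulting Deligne--Mumford GIT stack. Your phrase ``openness of semistability around the polystable $S_5$-orbit'' does not supply this, since there is no distinguished polystable point in $\PP\HH^0(\Sigma,-2K_\Sigma)$ to be open around.

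Your suggested alternative---reducing an arbitrary destabilizing one-parameter subgroup to a perturbation of $\rho^{\pm1}$ via the $\GG_m\times S_3$ and $S_5$ representation structures---might be workable, but it is a different and harder route than the paper's, and it does not obviously terminate: you would need to control all one-parameter subgroups, not just those commuting with the finite symmetry groups, and neither $C\in\cU$ generic nor $C\in\PP\HH^0(\Sigma,-2K_\Sigma)$ generic carries any nontrivial automorphism to invoke Theorem~\ref{T:kempf-luna} on.
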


\begin{proof} A starting point for us is the fact that the generic point of $\cQ$ lies on a smooth del Pezzo of degree $5$. Thus the projection of $\cQ$ to $\Grass(5, \mathbb S^{(2,1)}(V))$ consists of the closure of a single orbit 
$\SL(V)\cdot \Sp_{(1,2)}(\Sigma)$. We have seen in Propositions \ref{P:bielliptic} 
and \ref{P:singular-del-pezzo} that for $\beta>4$ any point of $\cQ$ lying over the boundary 
of this orbit's closure is unstable, and for $\beta=4$ only the point lying over 
$\SL(V)\cdot \Sp_{(1,2)}(\Sigma_0)$ becomes strictly semistable. 
It follows that $\beta=4$
is the first wall, as $\beta$ decreases, 
where the semistability locus changes, and so for $\beta>4-\epsilon$,
a point in $\cQ$ is semistable only if its projection to $\Grass(5, \mathbb S^{(2,1)}(V))$ 
lies in the union of the two orbits
$\SL(V)\cdot \Sp_{(1,2)}(\Sigma) \cup \SL(V)\cdot \Sp_{(1,2)}(\Sigma_0)$.

(1) For $\beta>4$, we have that $\cQ^{\s}(\beta) \subset \{h(C) \mid C\in \PP\HH^0(\Sigma, -2K_{\Sigma})\}$. 
The non-emptiness of $\cQ^{\s}(\beta)$ follows from the semistability of $\Sp_{(1,2)}(C)$ given by Lemma 
\ref{L:smooth-del-pezzo}
and the generic semistability of $\Sp_{(0,2)}(C)$. Since $\Sigma$, and hence its every quadric section, 
has no infinitesimal automorphisms, every semistable point is stable. Using the properness of the GIT quotient stack $\cG(\beta)$, we conclude
that in fact we must have an equality
\[
\cQ^{\s}(\beta) = \{h(C) \mid C\in \PP\HH^0(\Sigma, -2K_{\Sigma})\}.
\]
It follows that 
\[
\cG(\beta)\simeq \{h(C) \mid C\in \PP\HH^0(\Sigma, -2K_{\Sigma})\}/\SL(V) \simeq \PP\HH^0(\Sigma, -2K_{\Sigma})/\Aut(\Sigma)=X_6.
\]

(2) Since $\beta=4$ is the first wall (as $\beta$ decreases) where the semistability changes,
we have $\{h(C) \mid C\in \PP\HH^0(\Sigma, -2K_{\Sigma})\}\subset \cQ^{\s}(4)$. The inclusion 
$\{h(C) \mid C\in \cU\}\subset \cQ^{\s}(4)$ follows by Proposition \ref{P:singular-del-pezzo}. 
As we have already discussed, there can
be no other semistable points for $\beta=4$. 

Next we note that $C_0$ is the only point in 
$\cG(4)\setminus \left(\cG(4-\epsilon)\cup \cG(4+\epsilon)\right)$. The basin of attraction of
$C_0$  (that is the locus of points isotrivially degenerating to $C_0$) in $\cG(4-\epsilon)$ consists
of all of $\cU$, and the basin of attraction of
$C_0$  in $\cG(4+\epsilon)$ consists of a single point, $C'_0 \in \PP\HH^0(\Sigma, -2K_{\Sigma})$. 
It follows (by normality of the GIT quotients) 
that the open immersion of stacks $\cG(4+\epsilon) \subset \cG(4)$ induces an isomorphism on the level
of their moduli spaces. Namely, $\overline G(4+\epsilon)\simeq \overline G(4)$. 

(3) Let $C$ be a \emph{generic} quadric section 
of $\Sigma_0$ that does not pass through the singular point of $\Sigma_0$. 
We are going to prove that $h(C)=(\Sp_{(0,2)}(C), \Sp_{(1,2)}(C))$ is stable for $0<\beta<4$.
To this end, consider a double smooth cubic passing through the points 
$p_1=[1:0:0]$, $p_2=[1:1:0]$, $p_3=[0:1:0]$ and
$p_4=[0:0:1]$ in $\PP^2$. Its strict transform in $\Sigma_0$ is an elliptic ribbon $R$ of genus $6$
that does not pass through the singular point of $\Sigma_0$. 
By Proposition \ref{P:singular-del-pezzo},
$h(R)=(\Sp_{(0,2)}(R), \Sp_{(1,2)}(R))$ is semistable with respect to the linearization $\cO(1) \boxtimes \cO(4)$.
By Proposition \ref{P:bielliptic}, $\Sp_{(0,2)}(R)$ is strictly semistable. 
It follows that $h(R)$ is semistable 
with respect to all linearizations $\cO(1) \boxtimes \cO(\beta)$, where $\beta\in (0,4)$.

To prove that $\cQ^{\s}(\beta)$ has no strictly semistable points, we note that for $\beta\in (4-\epsilon, 4)$ 
no semistable point admits a $\GG_m$-action. Indeed,
the smooth del Pezzo $\Sigma$ has no quadric sections with $\GG_m$-action, and the only quadric
section of the singular del Pezzo $\Sigma_0$ is $C_0$, which is unstable for $\beta<4$.
This proves that $\cG(\beta)$ is Deligne-Mumford for $\beta\in (4-\epsilon, 4)$.

It remains to show that \emph{every} quadric section $C$
of $\Sigma_0$ that does not pass through the singular point of $\Sigma_0$ and such that $C\neq C_0$ 
is in fact semistable for $\beta\in (4-\epsilon, 4)$. 
This follows from the properness of the GIT quotient stack. Indeed, the open 
inclusion $\{h(C) \mid C\in \cU\cap \cQ^{\s}(\beta)\} \subset \{h(C) \mid C\in \cU, \ C\neq C_0\}$ must induce a 
birational morphism of projective quotients 
\[
\{h(C) \mid C\in \cU\cap \cQ^{\s}(\beta)\}\gitq \SL(V) \to \left(\cU\setminus [C_0]\right)\gitq \Aut(\Sigma_0) \simeq 
\PP(6^3, 12^5, 24^3, 28^4)/S_3,
\]
where the last identification follows from the explicit description of the $\GG_m$-action on 
$\cU \subset \PP\HH^0(\Sigma_0, -2K_{\Sigma_0})$ given by Equation \eqref{E:singular-quadrics}.
Since $\cQ^{\s}(\beta)$ has no strictly semistable points, this morphism must be an isomorphism and 
so every element of $\cU\setminus [C_0]$ is stable in $\cQ^{\s}(4-\epsilon)$. In particular,
$\cG(4-\epsilon)$ is isomorphic to $\Mg{6}$ at the generic point of the Gieseker-Petri divisor $D_{6,4}$.

(4) Given our stability results, the existence of the commutative diagram \eqref{E:VGIT} follows from the general theory of VGIT. We finally address the identifications of the GIT quotients $\overline G(\beta)$ 
with log canonical models appearing in the Hassett-Keel program for $\M_6$. Since for all $\beta>4-\epsilon$, 
the stacks $\cG(\beta)$ parameterize
Gorenstein curves with ample dualizing sheaf, and the locus of worse-than-nodal curves in $\cG(\beta)$
has codimension at least $2$, 
it follows from Equation \eqref{E:syzygy-quadrics} (for $g=6$ and $p=0,1$) that the Pl\"ucker line 
bundles $p_1^*\cO_{\Grass(6,\Sym^2 V)}(1)$
and $p_2^*\cO_{\Grass(5, \mathbb S^{(2,1)}(V))}(1)$
descend to $8\lambda - \delta$ and $\frac{47}{2}\lambda-3\delta$, respectively, on the GIT quotient.
It follows that $\cO(1)\boxtimes \cO(\beta)$ descends to $\overline G(\beta)$ as
an ample line bundle
\begin{equation}\label{E:polarization}
(8\lambda - \delta)+\beta\left(\frac{47}{2}\lambda-3\delta\right).
\end{equation}
The isomorphism $\overline G(4\mp \epsilon) \simeq \M_6(35/102\pm \epsilon)$ now follows
from routine discrepancy computations as in \cite[Proposition 4.3]{Muller6},
where the identification of $X_6$ with the log canonical models $\M_6(\alpha)$ for $\alpha\in (16/47, 35/102)$
was first established.

\end{proof}

\begin{corollary} The moving slope of $\M_6$ is $102/13$.
\end{corollary}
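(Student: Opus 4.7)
Recall that the moving slope of $\M_g$ is
\[
s^*(\M_g)\;:=\;\inf\bigl\{a/b \;:\; \text{the class } a\lambda-b\delta \text{ lies in the movable cone}\bigr\},
\]
where a class is called movable if its stable base locus has codimension at least two. The plan is to combine the VGIT construction in part~(4) of the theorem above with the standard descent of polarizations to realize $102\lambda-13\delta$ as a movable class of slope $102/13$ on $\M_6$, and to show that any class of strictly smaller slope must fail to be movable because it acquires $D_{6,4}$ as a fixed component.

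For the upper bound $s^*(\M_6)\le 102/13$, the starting point is the identity
\[
K_{\M_6}+\tfrac{35}{102}\delta \;=\; 13\lambda - \tfrac{169}{102}\delta \;=\; \tfrac{13}{102}\bigl(102\lambda-13\delta\bigr),
\]
so the class $102\lambda-13\delta$ has slope exactly $102/13$. By part~(4) of the theorem and the descent formula for the polarization $\cO(1)\boxtimes \cO(\beta)$ specialized to $\beta=4$, this is precisely the ample line bundle on $\overline G(4)\simeq \M_6(35/102)$. In particular it is semiample on $\M_6(35/102+\epsilon)$, as the pullback of ample under the divisorial contraction of $D_{6,4}$. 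Pulling back once more through the birational map $\M_6\dashrightarrow \M_6(35/102+\epsilon)$, which is an isomorphism at the generic point of $D_{6,4}$, produces on $\M_6$ a class whose stable base locus avoids $D_{6,4}$ and whose other components have codimension at least two. Hence $102\lambda-13\delta$ is movable on $\M_6$.

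For the lower bound $s^*(\M_6)\ge 102/13$, I exploit that $\M_6(35/102+\epsilon)\to \M_6(35/102)$ is a divisorial contraction of $D_{6,4}$ onto the single point $[C_0]$. Consequently $D_{6,4}$ is swept out by a covering family $\{B_t\}$ of irreducible contracted curves, each satisfying $(102\lambda-13\delta)\cdot B_t=0$ and therefore $\lambda\cdot B_t/\delta\cdot B_t=13/102$. For any class $a\lambda-b\delta$ with $a/b<102/13$ one then has $(a\lambda-b\delta)\cdot B_t<0$, which forces each $B_t$, and consequently (letting $t$ vary) the entire divisor $D_{6,4}$, to be contained in the support of every effective representative. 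Thus $D_{6,4}$ appears as a fixed divisorial component, and no such class is movable. Combined with the upper bound, this yields $s^*(\M_6)=102/13$.

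The main technical subtlety I expect is verifying rigorously that the pullback of the semiample class $102\lambda-13\delta$ across the intermediate Hassett-Keel walls between $\M_6$ and $\M_6(35/102+\epsilon)$ keeps $D_{6,4}$ out of the stable base locus on $\M_6$. This is really a statement about the Hassett-Keel program: each intermediate modification is a flip or divisorial contraction of a locus disjoint from the generic point of $D_{6,4}$, so $D_{6,4}$ enters the stable base locus of $K_{\M_6}+\alpha\delta$ precisely at the wall $\alpha=35/102$, and not before. Once this is granted, both inequalities follow as sketched above.
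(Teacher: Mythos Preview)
Your lower-bound argument is essentially the paper's: both produce a covering family of $D_{6,4}$ on which $102\lambda-13\delta$ vanishes, forcing $D_{6,4}$ into the base locus of any class of smaller slope. The paper is a bit more careful on two points you gloss over. First, it realizes the curves concretely as pencils on $\Sigma_0$ and checks that, away from codimension two in $\mathfrak D$, the fibers are at worst nodal and \emph{irreducible}; this is what lets the curves live on $\M_6$ itself rather than only on $\M_6(35/102+\epsilon)$. Second, and relatedly, the paper insists that these test curves meet only $\delta_0$ among the boundary components, so that the inequality $D\cdot T<0$ holds for \emph{any} effective $D=a\lambda-b_0\delta_0-\sum_{i\ge 1} b_i\delta_i$ with $a/b_0<102/13$, not just for classes proportional to $a\lambda-b\delta$.

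Your upper-bound argument, however, has a genuine gap that is not merely the ``technical subtlety'' you flag. The class $102\lambda-13\delta$ is \emph{not} movable on $\M_6$. For the standard elliptic-tail test curve $C\subset \delta_1$ one has $\lambda\cdot C=1$ and $\delta\cdot C=11$, so $(102\lambda-13\delta)\cdot C=-41<0$; since such curves sweep out $\delta_1$, the divisor $\delta_1$ lies in the stable base locus of $102\lambda-13\delta$. The problem is thus not whether $D_{6,4}$ stays out of the base locus under pullback through the Hassett--Keel walls, but that the boundary divisors contracted at $\alpha=9/11$ (and later) \emph{enter} it. Semiampleness on $\M_6(35/102+\epsilon)$ tells you nothing here, because the map from $\M_6$ is not small. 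The paper avoids this entirely by citing \cite[Prop.~4.1]{Muller6} for the upper bound; that argument produces an explicit moving divisor of slope $102/13$ on $\M_6$, necessarily with $\delta_i$-coefficients different from those of $K_{\Mg{6}}+\tfrac{35}{102}\delta$. Your descent argument cannot be repaired without supplying such a class.
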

\begin{proof} By \cite[Prop. 4.1]{Muller6}, the moving slope of $\M_6$ is at most $102/13$. It remains
to find a family $T$ of curves in $\M_6$ passing through the generic point of the Gieseker-Petri divisor $D_{6,4}$, 
such that $(T\cdot \delta_0)/(T\cdot \lambda)=102/13$, and such that $T$ avoids the boundary divisors $\delta_i$,
$i\geq 1$. The existence of such a family is immediate from the fact that the strict transform of $D_{6,4}$
in $\M_{6}(35/102+\epsilon)$ is the GIT quotient
\[
\mathfrak D:=(\cU\setminus [C_0]) \gitq \operatorname{Aut}(\Sigma_0) \simeq \PP(6^3, 12^5, 24^3, 28^4)/S_3.
\]
Indeed, since $\cU$ is an open subset of a complete linear system on $\Sigma_0$, it follows
that $\mathfrak D$ parameterizes at worst nodal irreducible curves away from codimension $2$. 
Since $\mathfrak D$ has Picard number $1$, all curves in $\mathfrak D$ 
have the same slope. The
existence of a requisite family $T$ follows from the fact that the line bundle 
\[
\cO(1)\boxtimes \cO(4)=\left(8\lambda-\delta\right)+4\left(\frac{47}{2}\lambda-3\delta\right)=102\lambda-13\delta
\]
is trivial on $\mathfrak D$.
\end{proof}

\subsubsection{Future directions} It is conceivable that one might be able
to complete the VGIT analysis of $\cQ^{\s}(\beta)$ for \emph{all} $\beta\in (0,4) \cap \QQ$. Since 
genus $6$ curves of Clifford index $0$ and $1$ have no well-defined $(1,2)$-syzyzy points, and bielliptic 
curves are unstable for all positive $\beta$ by Proposition \ref{P:bielliptic}, we expect that the
resulting GIT quotients 
\[
\overline G(\beta)=\cQ^{\s}(\beta)\gitq \SL(V)
\]
will parameterize canonical genus $6$ curves lying on arbitrary del Pezzo surfaces 
(in the sense of \cite[Definition 8.1.12]{CAG}) of degree $5$ in $\PP^5$. 
By \cite[\S8.5.1 and Table 8.5]{CAG}, there are just $6$ types of singular del Pezzos of degree $5$, 
corresponding to different root sublattices in a root lattice of type $A_4$,
with $\Sigma_0$ being the least singular of them.  
We expect that as $\beta$ decreases, $\overline G(\beta)$ will contain quadric sections of del Pezzos with worse and worse 
singularities, until for some small $\beta$, we will see all possible del Pezzo surfaces. 

If for all positive $\beta$ the GIT quotients $\overline G(\beta)$ do indeed parameterize only quadric sections of all 
degree $5$ del Pezzos, then, by a standard double 
covering construction, we will 
obtain a sequence of compact moduli spaces of K3 surfaces 
that fits into the Hassett-Keel-Looijenga program (see \cite{laza-ogrady})  
for the moduli space of K3 surfaces studied in \cite{artebani-kondo}.

\bibliographystyle{alpha}

\bibliography{NSF_bib}

\def\cprime{$'$} \def\cprime{$'$}
\begin{thebibliography}{{Kem}17}

\bibitem[ABS17]{aprodu-bruno-sernesi}
Marian {Aprodu}, Andrea {Bruno}, and Edoardo {Sernesi}.
\newblock {A characterization of bielliptic curves via syzygy schemes}, 2017.
\newblock {\tt arXiv:1708.08056 [math.AG]}.

\bibitem[AF11]{aprodu-farkas-survey}
Marian Aprodu and Gavril Farkas.
\newblock Koszul cohomology and applications to moduli.
\newblock In {\em Grassmannians, moduli spaces and vector bundles}, volume~14
  of {\em Clay Math. Proc.}, pages 25--50. Amer. Math. Soc., Providence, RI,
  2011.

\bibitem[AFS13]{AFS-stability}
Jarod Alper, Maksym Fedorchuk, and David~Ishii Smyth.
\newblock Finite {H}ilbert stability of (bi)canonical curves.
\newblock {\em Invent. Math.}, 191(3):671--718, 2013.

\bibitem[AK11]{artebani-kondo}
Michela Artebani and Shigeyuki Kond\=o.
\newblock The moduli of curves of genus six and {$K3$} surfaces.
\newblock {\em Trans. Amer. Math. Soc.}, 363(3):1445--1462, 2011.

\bibitem[BE95]{BE}
Dave Bayer and David Eisenbud.
\newblock Ribbons and their canonical embeddings.
\newblock {\em Trans. Amer. Math. Soc.}, 347(3):719--756, 1995.

\bibitem[CH88]{CH}
Maurizio Cornalba and Joe Harris.
\newblock Divisor classes associated to families of stable varieties, with
  applications to the moduli space of curves.
\newblock {\em Ann. Sci. \'Ecole Norm. Sup. (4)}, 21(3):455--475, 1988.

\bibitem[Cop86]{coppens}
Marc Coppens.
\newblock The {W}eierstrass gap sequence of the ordinary ramification points of
  trigonal coverings of {${\bf P}^1$}; existence of a kind of {W}eierstrass gap
  sequence.
\newblock {\em J. Pure Appl. Algebra}, 43(1):11--25, 1986.

\bibitem[Deo17]{deopurkar-carpets}
Anand Deopurkar.
\newblock The canonical syzygy conjecture for ribbons.
\newblock {\em Mathematische Zeitschrift}, 2017.

\bibitem[DFS16]{DFS-syzygy}
Anand Deopurkar, Maksym Fedorchuk, and David Swinarski.
\newblock Toward {GIT} stability of syzygies of canonical curves.
\newblock {\em Algebr. Geom.}, 3(1):1--22, 2016.

\bibitem[DH98]{dolgachev-hu}
Igor~V. Dolgachev and Yi~Hu.
\newblock Variation of geometric invariant theory quotients.
\newblock {\em Inst. Hautes \'Etudes Sci. Publ. Math.}, (87):5--56, 1998.
\newblock With an appendix by Nicolas Ressayre.

\bibitem[Dol12]{CAG}
Igor~V. Dolgachev.
\newblock {\em Classical algebraic geometry}.
\newblock Cambridge University Press, Cambridge, 2012.
\newblock A modern view.

\bibitem[{Far}17]{farkas-survey}
Gavril {Farkas}.
\newblock {Progress on syzygies of algebraic curves}, 2017.
\newblock {\tt arXiv:1703.08056 [math.AG]}.

\bibitem[FH91]{fulton-harris}
William Fulton and Joe Harris.
\newblock {\em Representation theory}, volume 129 of {\em Graduate Texts in
  Mathematics}.
\newblock Springer-Verlag, New York, 1991.
\newblock A first course, Readings in Mathematics.

\bibitem[FJ13]{fedorchuk-jensen}
Maksym Fedorchuk and David Jensen.
\newblock Stability of 2nd {H}ilbert points of canonical curves.
\newblock {\em Int. Math. Res. Not. IMRN}, 2013(22):5270--5287, 2013.

\bibitem[FR17]{farkas-rimanyi}
Gavril {Farkas} and Richard {Rim\'{a}nyi}.
\newblock {Quadric rank loci on moduli of curves and K3 surfaces}, 2017.
\newblock {\tt arXiv:1707.00756 [math.AG]}.

\bibitem[Gie77]{gieseker-surfaces}
David Gieseker.
\newblock Global moduli for surfaces of general type.
\newblock {\em Invent. Math.}, 43(3):233--282, 1977.

\bibitem[Gie82]{gieseker}
David Gieseker.
\newblock {\em Lectures on moduli of curves}, volume~69 of {\em Tata Institute
  of Fundamental Research Lectures on Mathematics and Physics}.
\newblock Published for the Tata Institute of Fundamental Research, Bombay,
  1982.

\bibitem[Gie83]{gieseker-cime}
David Gieseker.
\newblock Geometric invariant theory and applications to moduli problems.
\newblock In {\em Invariant theory. Proceedings of the 1st 1982 {S}ession of
  the {C}entro {I}nternazionale {M}atematico {E}stivo ({CIME}), {M}ontecatini,
  {J}une 10--18, 1982}, volume 996 of {\em Lecture Notes in Mathematics}, pages
  v+159. Springer-Verlag, Berlin, 1983.

\bibitem[GP97]{k3-carpets}
Francisco~Javier Gallego and B.~P. Purnaprajna.
\newblock Degenerations of {$K3$} surfaces in projective space.
\newblock {\em Trans. Amer. Math. Soc.}, 349(6):2477--2492, 1997.

\bibitem[Gre84]{green}
Mark~L. Green.
\newblock Koszul cohomology and the geometry of projective varieties.
\newblock {\em J. Differential Geom.}, 19(1):125--171, 1984.

\bibitem[Gro63]{EGAIII2}
A.~Grothendieck.
\newblock \'el\'ements de g\'eom\'etrie alg\'ebrique. {III}. \'etude
  cohomologique des faisceaux coh\'erents. {II}.
\newblock {\em Inst. Hautes \'Etudes Sci. Publ. Math.}, (17):91, 1963.

\bibitem[HH09]{HH1}
Brendan Hassett and Donghoon Hyeon.
\newblock Log canonical models for the moduli space of curves: the first
  divisorial contraction.
\newblock {\em Trans. Amer. Math. Soc.}, 361(8):4471--4489, 2009.

\bibitem[HH13]{HH2}
Brendan Hassett and Donghoon Hyeon.
\newblock Log minimal model program for the moduli space of stable curves: the
  first flip.
\newblock {\em Ann. of Math. (2)}, 177(3):911--968, 2013.

\bibitem[Kem78]{kempf}
George~R. Kempf.
\newblock Instability in invariant theory.
\newblock {\em Ann. of Math. (2)}, 108(2):299--316, 1978.

\bibitem[{Kem}17]{kemeny-survey}
Michael {Kemeny}.
\newblock {Syzygies of curves beyond Green's Conjecture}, 2017.
\newblock \texttt{arXiv:1711.04463 [math.AG]}.

\bibitem[LO16]{laza-ogrady}
Radu Laza and Kieran~G. O'Grady.
\newblock Birational geometry of the moduli space of quartic k3 surfaces, 2016.
\newblock {\tt arXiv:1607.01324}.

\bibitem[Lun73]{luna}
Domingo Luna.
\newblock Slices \'etales.
\newblock In {\em Sur les groupes alg\'ebriques}, pages 81--105. Bull. Soc.
  Math. France, Paris, M\'emoire 33. Soc. Math. France, Paris, 1973.

\bibitem[Mar46]{maroni}
Arturo Maroni.
\newblock Le serie lineari speciali sulle curve trigonali.
\newblock {\em Ann. Mat. Pura Appl. (4)}, 25:343--354, 1946.

\bibitem[MFK94]{GIT}
David Mumford, John Fogarty, and Frances Kirwan.
\newblock {\em Geometric invariant theory}, volume~34 of {\em Ergebnisse der
  Mathematik und ihrer Grenzgebiete (2) [Results in Mathematics and Related
  Areas (2)]}.
\newblock Springer-Verlag, Berlin, third edition, 1994.

\bibitem[MOP17]{MOP}
Alina Marian, Dragos Oprea, and Rahul Pandharipande.
\newblock Segre classes and {H}ilbert schemes of points.
\newblock {\em Ann. Sci. \'Ec. Norm. Sup\'er. (4)}, 50(1):239--267, 2017.

\bibitem[Mor99]{morrison-K3}
Ian Morrison.
\newblock Stability of {H}ilbert points of generic {K}3 surfaces.
\newblock {\em Centre de Recerca Matem\`{a}tica Publication}, 401, 1999.

\bibitem[Mor09]{morrison-git}
Ian Morrison.
\newblock G{IT} constructions of moduli spaces of stable curves and maps.
\newblock In {\em Surveys in differential geometry. {V}ol. {XIV}. {G}eometry of
  {R}iemann surfaces and their moduli spaces}, volume~14 of {\em Surv. Differ.
  Geom.}, pages 315--369. Int. Press, Somerville, MA, 2009.

\bibitem[M{\"u}l14]{Muller6}
Fabian M{\"u}ller.
\newblock The final log canonical model of {$\overline{\mathcal{M}}_6$}.
\newblock {\em Algebra Number Theory}, 8(5):1113--1126, 2014.

\bibitem[SB89]{barron-6}
Nicholas~I. Shepherd-Barron.
\newblock Invariant theory for {$S_5$} and the rationality of {$M_6$}.
\newblock {\em Compositio Math.}, 70(1):13--25, 1989.

\bibitem[Sch86]{Schreyer1986}
Frank-Olaf Schreyer.
\newblock Syzygies of canonical curves and special linear series.
\newblock {\em Math. Ann.}, 275(1):105--137, 1986.

\bibitem[Vie95]{viehweg}
Eckart Viehweg.
\newblock {\em Quasi-projective moduli for polarized manifolds}, volume~30 of
  {\em Ergebnisse der Mathematik und ihrer Grenzgebiete (3) [Results in
  Mathematics and Related Areas (3)]}.
\newblock Springer-Verlag, Berlin, 1995.

\bibitem[Voi05]{voisin-green-odd}
Claire Voisin.
\newblock Green's canonical syzygy conjecture for generic curves of odd genus.
\newblock {\em Compos. Math.}, 141(5):1163--1190, 2005.

\end{thebibliography}

\end{document}